\newtheorem{thm}{Theorem}[section]
\newtheorem{lemma}[thm]{Lemma}
\newtheorem{cor}[thm]{Corollary}
\newtheorem{prop}[thm]{Proposition}
\theoremstyle{definition}
\newtheorem{remark}[thm]{Remark}
\newtheorem{definition}[thm]{Definition}
\newtheorem{conjecture}[thm]{Conjecture}
\numberwithin{equation}{section}
\newcommand{\orig}{\mathbf{0}}
\newcommand{\card}[1]{\mid\!{#1}\!\mid}
\newcommand{\pro}[2]{\langle{#1},{#2}\rangle}
\newcommand{\NQ}{N_\mathbb{Q}}
\newcommand{\bd}{\partial}
\newcommand{\Hom}{\mathrm{Hom}}
\newcommand{\N}{\mathbb{Z}_{>0}}
\newcommand{\Z}{\mathbb{Z}}
\newcommand{\Q}{\mathbb{Q}}
\newcommand{\C}{\mathbb{C}}
\newcommand{\Proj}{\mathbb{P}}
\newcommand{\V}[1]{\mathcal{V}\!\left({#1}\right)}
\newcommand{\E}[1]{\mathcal{E}\!\left({#1}\right)}
\newcommand{\F}[1]{\mathcal{F}\!\left({#1}\right)}
\newcommand{\dual}{*}
\newcommand{\Lambdadual}{\Lambda^*}
\newcommand{\LambdaP}{\Lambda_P}
\newcommand{\LambdaV}[1]{\Lambda_{\mathcal{V}\left({#1}\right)}}
\newcommand{\LambdaE}[1]{\Lambda_{\mathcal{E}\left({#1}\right)}}
\newcommand{\Rcal}{\mathcal{R}}
\newcommand{\magma}{{\sc Magma}}
\newcommand{\intr}[1]{\mathrm{int}\!\left({#1}\right)}
\newcommand{\Vol}[1]{\mathrm{Vol}\!\left({#1}\right)}
\newcommand{\conv}[1]{\mathrm{conv}\!\left({#1}\right)}
\newcommand{\sconv}[1]{\mathrm{conv}\!\left\{{#1}\right\}}
\newcommand{\abs}[1]{\left\vert{#1}\right\vert}
\newcommand{\mult}[1]{\mathrm{mult}\,{#1}}
\renewcommand{\Re}[1]{\mathrm{Re}\!\left(#1\right)}
\newcommand{\lcm}[1]{\mathrm{lcm}\!\left\{#1\right\}}
\newcommand{\modb}[1]{~\mathrm{(mod}\,#1\mathrm{)}}
\renewcommand{\gcd}[1]{\mathrm{gcd}\!\left\{{#1}\right\}}
\newenvironment{ack}{\bigskip\noindent\textbf{Acknowledgments.}}{}
\begin{document}
\author[A.~M.~Kasprzyk]{Alexander M.~Kasprzyk}
\address{Department of Mathematics\\Imperial College London\\London, SW$7$\ $2$AZ\\UK}
\email{a.m.kasprzyk@imperial.ac.uk}
\author[B.~Nill]{Benjamin Nill}
\address{Department of Mathematics\\Case Western Reserve University\\Cleveland\ OH\ $44106$\\USA}
\email{bnill@math.uga.edu}
\title[Reflexive polytopes of higher index and the number $12$]{Reflexive polytopes of higher index\\and the number $12$}
\begin{abstract}
We introduce reflexive polytopes of index $l$ as a natural generalisation of the notion of a reflexive polytope of index $1$. These $l$-reflexive polytopes also appear as dual pairs. In dimension two we show that they arise from reflexive polygons via a change of the underlying lattice. This allows us to efficiently classify all isomorphism classes of $l$-reflexive polygons up to index $200$. As another application, we show that any reflexive polygon of arbitrary index satisfies the famous ``number $12$'' property. This is a new, infinite class of lattice polygons possessing this property, and extends the previously known sixteen instances. The number $12$ property also holds more generally for $l$-reflexive non-convex or self-intersecting polygonal loops. We conclude by discussing higher-dimensional examples and open questions.
\end{abstract}
\maketitle
\begin{center}
\vspace{1em}
\emph{Dedicated to the memory of Maximilian Kreuzer.}
\vspace{1em}
\end{center}
\section{Introduction and main results}
\subsection{Notation} 
We begin by recalling some basic definitions, and by fixing our notation.

Let $N\cong\Z^n$ be a lattice, and let $P\subset\NQ:=N\otimes_\Z\Q$ be an $n$-dimensional lattice polytope; i.e.~the set of vertices of $P$, denoted by $\V{P}$, is contained in the lattice $N$. We denote the interior of $P$ by $\intr{P}$ and its boundary by $\bd P$. The set of facets (codimension one faces) of $P$ is referred to by $\F{P}$. The \emph{volume of $P$} will always mean the normalised volume $\Vol{P}$ with respect to the ambient lattice $N$. Two lattice polytopes $P\subseteq \NQ$ and $P'\subseteq N'_\Q$ are isomorphic if there exists an affine lattice isomorphism $N\cong N'$ mapping $\V{P}$ onto $\V{P'}$.

A lattice point $x$ in $N\setminus\left\{\orig\right\}$ is \emph{primitive} if the line segment joining $x$ and $\orig$ contains no other lattice points. We denote by $M$ the dual lattice $\Hom(N,\Z)$ of $N$. Given a facet $F$ of $P$ we define its \emph{primitive outer normal} to be the unique primitive lattice point $u_F\in M$ such that $F=\left\{x\in P\mid\pro{u_F}{x}=l_F\right\}$ for some (uniquely determined) $l_F\in\N$. We call $l_F$ the \emph{local index} of $F$; it is equal to the integral distance of $\orig$ from the affine hyperplane spanned by $F$.

\subsection{Reflexive polytopes of higher index}
Reflexive polytopes were first introduced by Batyrev in~\cite{Bat94} in the context of Mirror Symmetry. In subsequent years they were intensively studied and classified as important examples of Fano varieties in toric geometry, and used for the construction of Calabi-Yau varieties (e.g.~\cite{BB96a,BB96b,KS97,KS00,Nil05,Cas06,Oeb07}). They are also intimately connected to commutative algebra and combinatorics via the study of Gorenstein polytopes~\cite{BB97,Ath05,BrR07,BN08}. Here we present a natural generalisation of this setting, 
which we hope will be put to good use in future applications.

\begin{definition}
A lattice polytope $P$ is called \emph{$l$-reflexive} if, for some $l\in\N$, the following conditions hold:
\begin{enumerate}
\item $P$ contains the origin in its (strict) interior;
\item the vertices of $P$ are primitive;
\item for any facet $F$ of $P$ the local index $l_F$ equals $l$.
\end{enumerate}
We also refer to $P$ as a \emph{reflexive polytope of index $l$}.
\end{definition}

The $1$-reflexive polytopes are precisely the reflexive polytopes of~\cite{Bat94}. Note that the requirement that the vertices are primitive prevents multiples of $1$-reflexive polytopes from being $l$-reflexive.

\subsection{Duality}
Let $P\subseteq\NQ$ be a full-dimensional lattice polytope. The dual polyhedron
$$P^\dual:=\left\{y\in M_\Q\mid\pro{y}{x}\leq 1\right\}$$
is a (not necessarily lattice) polytope if and only if $\orig$ lies in the interior of $P$. It is a well-known characterisation of reflexive polytopes that $P$ is reflexive if and only if $P^\dual$ is a lattice polytope (see~\cite{Bat94}). Clearly $P^\dual$ is also a reflexive polytope.

This characterisation has a natural reformulation for $l$-reflexive polytopes:

\begin{prop}\label{prop:duality}
Let $P$ be a lattice polytope with primitive vertices, such that $P$ contains the origin in its interior. Then $P$ is $l$-reflexive if and only if $lP^\dual$ is a lattice polytope having only primitive vertices. In this case, $lP^\dual$ is also $l$-reflexive. This induces a natural duality for $l$-reflexive polytopes:
\begin{equation}\label{eq:correspondence}
P\ \longleftrightarrow\ lP^\dual.
\end{equation}
\end{prop}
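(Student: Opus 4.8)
The plan is to deduce everything from the standard combinatorial duality between $P$ and $P^\dual$, made explicit on vertices. Since $P$ is full-dimensional and $\orig\in\intr{P}$, the dual $P^\dual$ is again a full-dimensional polytope with $\orig$ in its interior, and the face lattices of $P$ and $P^\dual$ are anti-isomorphic. Under this correspondence a facet $F$ of $P$, with primitive outer normal $u_F\in M$ and local index $l_F$, is matched with the vertex $u_F/l_F$ of $P^\dual$ (the unique point $y\in P^\dual$ with $\pro{y}{x}=1$ for all $x\in F$), and every vertex of $P^\dual$ is of this form; dually, a vertex $v$ of $P$ is matched with the facet $\left\{y\in P^\dual\mid\pro{y}{v}=1\right\}$ of $P^\dual$. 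Consequently the vertices of $lP^\dual$ are precisely the points $(l/l_F)\,u_F$ for $F\in\F{P}$, and its facets are the sets $\left\{z\in lP^\dual\mid\pro{v}{z}=l\right\}$ for $v\in\V{P}$, where $v$ is regarded as a linear functional on $M_\Q$ via the identification of $N$ with the dual lattice of $M$.

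Granting this description, the equivalence drops out of a single observation about primitivity. Assume $lP^\dual$ is a lattice polytope all of whose vertices are primitive. Fix a facet $F$ of $P$. As $u_F$ is primitive, $(l/l_F)\,u_F\in M$ forces $l_F\mid l$; and then the lattice point $(l/l_F)\,u_F$ is primitive only if $l/l_F=1$, i.e.\ $l_F=l$. Since this holds for every facet, and since $P$ was assumed to contain $\orig$ in its interior and to have primitive vertices, $P$ is $l$-reflexive. Conversely, if $P$ is $l$-reflexive then $l_F=l$ for all $F$, so the vertices of $lP^\dual$ are exactly the primitive lattice points $u_F$; in particular $lP^\dual$ is a lattice polytope with primitive vertices.

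To see that $lP^\dual$ is itself $l$-reflexive, note that it contains $\orig$ in its interior (because $P^\dual$ does and $l>0$) and that its vertices are primitive (just shown). Moreover every facet of $lP^\dual$ has the form $\left\{z\in lP^\dual\mid\pro{v}{z}=l\right\}$ for some $v\in\V{P}$, and on $lP^\dual$ one has $\pro{v}{z}\leq l$; since $v$ is primitive (as $P$ is $l$-reflexive) and $l\in\N$, the primitive outer normal of this facet is $v$ and its local index equals $l$. Thus every facet of $lP^\dual$ has local index $l$, so $lP^\dual$ is $l$-reflexive. Finally $P\mapsto lP^\dual$ is an involution on $l$-reflexive polytopes: since $(lP^\dual)^\dual=\tfrac1l P^{\dual\dual}=\tfrac1l P$, we obtain $l\,(lP^\dual)^\dual=P$, which is exactly the duality~\eqref{eq:correspondence}.

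I expect no genuine obstacle here: the mathematical content is the one-line dichotomy that a positive-integer multiple $k\,u_F$ of a primitive vector is itself primitive if and only if $k=1$. The only thing demanding care is the bookkeeping — setting up the vertex--facet dictionary for $P^\dual$ correctly, keeping track of the lattice in which each polytope and each normal vector lives, and carrying the scaling factor $l$ cleanly through every step.
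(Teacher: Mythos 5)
Your proof is correct and follows essentially the same route as the paper's: the vertex--facet dictionary identifying $\V{P^\dual}$ with $\left\{u_F/l_F\right\}$, the observation that a positive multiple of a primitive lattice vector is primitive only when the multiplier is $1$ (forcing $l_F=l$), and the identity $l(lP^\dual)^\dual=P$ for the duality. No gaps.
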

\begin{proof}
Note that the vertices of $P^\dual$ are precisely the points $u_F/l_F$, for each facet $F$ of $P$. Analogously, the facets of $P^\dual$ are in one-to-one correspondence with the vertices of $P$.

If $P$ is $l$-reflexive then $lP^\dual=\left\{u_F\mid F\in\F{P}\right\}$, so any vertex of $lP^\dual$ is primitive. Moreover, any facet of $lP^\dual$ is given as $\left\{x\in lP^\dual\mid\pro{v}{x}=l\right\}$ for some $v\in\V{P}$. Since the vertices of $P$ are primitive, it follows that any facet of $lP^\dual$ has local index $l$. Hence $lP^\dual$ is also $l$-reflexive.

Conversely, suppose that $lP^\dual$ is a lattice polytope having only primitive vertices. Then for any facet $F\in\F{P}$ we see that $l(u_F/l_F)$ is a primitive lattice point. Hence $l_F=l$ and $P$ is $l$-reflexive.

Finally, since
$$l(lP^\dual)^\dual=l(\frac{1}{l}P)=P,$$
the duality~\eqref{eq:correspondence} follows by symmetry.
\end{proof}

Notice that when $l=1$ we recover the usual duality of reflexive polytopes.

\subsection{Finiteness and classification}\label{sec:finite}
A reflexive polytope (of index $1$) does not contain any lattice points besides the origin in its interior. This is, in general, not true for reflexive polytopes of higher index. However, it follows from their definition that an $l$-reflexive polytope $P\subseteq\NQ$ satisfies
$$\intr{P/l}\cap N=\left\{\orig\right\},$$
or, equivalently, that $\abs{\intr{P}\cap lN}=1$. A result of Lagarias and Ziegler~\cite{LZ91} implies that, for fixed dimension $n$ and index $l$, there are only \emph{finitely many} isomorphism classes of $n$-dimensional $l$-reflexive polytopes. 

In dimension one there are no $l$-reflexive polytopes when $l>1$, and only one when $l=1$: the line segment $[-1,1]$ corresponding to $\Proj^1$. In dimension two the $l$-reflexive polygons form a subset of the LDP-polygons studied in~\cite{KKN08}. The reader is invited to try to find some $l$-reflexive polygons before proceeding. Whilst it is not too difficult to find all sixteen $1$-reflexive polygons (draw any convex lattice polygon with no interior lattice points other than the origin), it is actually quite challenging to find examples of higher index. For instance, one quickly suspects that there is no reflexive polygon of index $2$. Even more is true, as will be explained in Section~\ref{sec:dim2}.

\begin{prop}\label{prop:odd}
There is no $l$-reflexive polygon of even index.
\end{prop}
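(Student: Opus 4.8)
The engine of the proof is a reduction modulo $2$. Suppose $P\subseteq\NQ$ is $l$-reflexive with $l$ even, and let $F$ be an edge with primitive outer normal $u_F$, so that $\pro{u_F}{x}=l$ for the two vertices $x$ of $F$. The image $\overline{u_F}$ of $u_F$ in $\Hom(N/2N,\Z/2\Z)\cong(\Z/2\Z)^2$ is nonzero, so its kernel is a two-element subgroup $\left\{\orig,\overline{w}_F\right\}$ of $N/2N$; since $l$ is even both vertices of $F$ lie in this kernel, and since the vertices of $P$ are primitive their images are nonzero, forcing both to equal $\overline{w}_F$. Hence the two vertices of every edge have the same class modulo $2$, and — the edges forming a cycle through all vertices — I conclude that \emph{all} vertices of $P$ reduce to one and the same nonzero class $\overline{w}\in N/2N$. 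By Proposition~\ref{prop:duality} the same applies to $lP^\dual$, whose vertices are the $u_F$.

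This has two consequences. First, every edge of $P$ is divisible by $2$ in $N$ (its endpoints being congruent mod $2N$), so each edge has lattice length at least $2$; with at least three edges, $\card{\bd P\cap N}\geq6$, and likewise $\card{\bd(lP^\dual)\cap M}\geq6$. Second, $\tfrac12(P-v)$ is a lattice polytope for any vertex $v$, whence $4\mid\Vol P$, and similarly $4\mid\Vol(lP^\dual)$. Next I perform a change of lattice in the spirit of Proposition~\ref{prop:duality}: set $\Lambda:=\left\{x\in N:\overline{x}\in\{\orig,\overline{w}\}\right\}$, an index-two sublattice containing all vertices of $P$ and the origin. The vertices stay primitive in $\Lambda$, and since $\pro{u_F}{x}\in2\Z$ for all $x\in\Lambda$ the point $u_F/2$ lies in $\Lambda^\dual$, is primitive there, and cuts out $F$ at level $l/2$; so $P$ is $(l/2)$-reflexive with respect to $\Lambda$. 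Iterating until the index is odd — writing $l=2^aq$, so $P$ is $q$-reflexive with respect to a sublattice $L\subseteq N$ of index $2^a$ — and sharpening the computation above to work modulo $4$ at the final halving (using that the local indices halve), one checks that the vertices of $P$ are still all congruent modulo $2L$ and the normals all congruent modulo $2L^\dual$. Hence $4\mid\Vol_L(P)$ and $4\mid\Vol_{L^\dual}(qP^\dual)$, while also $\card{\bd P\cap L}\geq6$ and $\card{\bd(qP^\dual)\cap L^\dual}\geq6$.

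To finish one uses the identity $\Vol_L(P)=q\cdot\card{\bd P\cap L}$, valid for any $l$-reflexive polygon: for consecutive vertices $v_i,v_{i+1}$ one has $\abs{\det(v_i,v_{i+1})}$ equal to $l$ times the lattice length of the edge $v_iv_{i+1}$, and summing around the boundary gives $\Vol P=l\,\card{\bd P\cap N}$. As $q$ is odd, $4\mid q\,\card{\bd P\cap L}$ forces $4\mid\card{\bd P\cap L}$, so $\card{\bd P\cap L}\geq8$; the same argument for $qP^\dual$ gives $\card{\bd(qP^\dual)\cap L^\dual}\geq8$. This contradicts the ``number $12$'' identity $\card{\bd P\cap L}+\card{\bd(qP^\dual)\cap L^\dual}=12$ — the classical statement when $q=1$ (indeed there $\card{\bd P\cap L}\geq8$ together with the reflexive bound $\card{\bd P\cap L}\leq9$ already forces $\card{\bd P\cap L}=8$, so its dual would have only $4<6$ boundary points), and its higher-index version (Section~\ref{sec:dim2}) when $q>1$.

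The main obstacle is exactly this last step. The congruence computations are elementary and robust, but by themselves they only describe $P$ after the lattice change, and one is left with a $q$-reflexive polygon of \emph{odd} index $q$, against which the mod-$2$ machinery is powerless; closing the argument therefore needs the $12$-property (equivalently, the classification) for higher-index reflexive polygons established later. An alternative, cleaner route to the same conclusion would be to prove the $12$-property first and then deduce Proposition~\ref{prop:odd} from the parity bounds $4\mid\card{\bd P\cap L}$, $\card{\bd P\cap L}\geq6$ and their duals as above; either way, the mod-$2$ congruence of the vertices is the new ingredient that does the real work.
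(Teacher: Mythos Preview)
Your argument is correct and takes a genuinely different route from the paper's. Both proofs open with the same parity observation---when $l$ is even the two vertices of any edge are congruent modulo $2N$, so every edge of $P$ and of $lP^\dual$ has even lattice length (equivalently, a lattice midpoint). From there the paths diverge. The paper invokes Corollary~\ref{cor:reflexive} to pass in a single step to the associated $1$-reflexive pair $Q,Q^\dual$ in the lattice $\LambdaP$; since $\bd P\cap N=\bd P\cap\LambdaP$ the property ``every edge of both polygons has an interior lattice point'' persists, and inspecting the sixteen $1$-reflexive polygons shows no such dual pair exists. You instead iterate your own mod-$2$ sublattice construction down to odd index $q$ in a lattice $L$ of index $2^a$ (which is genuinely different from $\LambdaP$ when $q>1$), and then appeal to a ``mod-$4$ sharpening'' to obtain $4\mid\abs{\bd P\cap L}\ge 6$ and the dual bound, forcing both counts to be at least $8$, in contradiction with Theorem~\ref{thm:number12}.

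That mod-$4$ step is asserted but not proved, and it is the one nontrivial extra ingredient your route needs; it does check out (for adjacent vertices $v,v'$ on an edge with primitive normal $u$ one has $v-v'\in 2\Z\cdot u^\perp$, and since $\overline{u}$ is orthogonal to $\overline{w}$ one finds $\overline{u^\perp}=\overline{w}$, whence $u^\perp$ lies in the next sublattice and $v-v'\in 2L$; the dual computation is symmetric), but you should write it out. Both proofs ultimately rest on Proposition~\ref{prop:lattice}---the paper directly via Corollary~\ref{cor:reflexive}, yours indirectly via Theorem~\ref{thm:number12}---so neither is more elementary in that sense. The paper's version is shorter; yours trades the sixteen-case inspection for a clean divisibility argument, at the cost of the mod-$4$ computation.
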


There is precisely one $3$-reflexive polygon, which we denote by $P_3$ and is illustrated in Figure~\ref{fig:P3}. This example generalises to a family of $l$-reflexive polygons, one for each odd index. Let $P_l$ be the polygon defined by the convex hull of $\left\{\pm(0,1), \pm(l,2), \pm(l,1)\right\}$. This is a centrally-symmetric hexagon and, since $l$ is odd, $P_l$ is an $l$-reflexive polygon. To see this note that the vertices of $P_l^\dual$ are given by $\left\{\pm(\frac{1}{l},0), \pm(\frac{2}{l},-1), \pm(\frac{1}{l},-1)\right\}$. Hence $P_l$ is actually self-dual in the sense that it is isomorphic to $lP_l^\dual$.

\begin{figure}[htbp]
\centering
\includegraphics[scale=0.8]{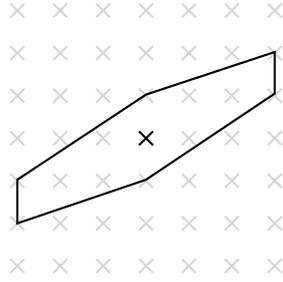}
\caption{The unique $3$-reflexive polygon $P_3$.}
\label{fig:P3}
\end{figure}

Notice that $P:=\sconv{\pm(0,1),\pm(1,1),\pm(1,0)}$ is the unique $1$-reflexive polygon which is also a centrally-symmetric hexagon, and that $P_l=\varphi(P)$, where the map $\varphi$ is given by right multiplication with the matrix
$$\begin{pmatrix}
l&1\\
0&1
\end{pmatrix}.
$$
In Corollary~\ref{cor:reflexive} we will generalise this observation to any $l$-reflexive polygon. This gives a fast classification algorithm, which we implemented in {\magma} (see Appendix~\ref{apdx:source_code}). 

\begin{thm}\label{thm:classification}
For each positive odd integer $l$ let $n(l)$ be the number of isomorphism classes of $l$-reflexive polygons. Then, for $1\leq l<60$:
\begin{center}
\vspace{0.75em}
\begin{tabular}{|r|c|c|c|c|c|c|c|c|c|c|c|c|c|c|c|}\hline
$l$&$1$&$3$&$5$&$7$&$9$&$11$&$13$&$15$&$17$&$19$&$21$&$23$&$25$&$27$&$29$\\\hline
$n(l)$&$16$&$1$&$12$&$29$&$1$&$61$&$81$&$1$&$113$&$131$&$2$&$163$&$50$&$2$&$215$\\\hline\hline
$l$&$31$&$33$&$35$&$37$&$39$&$41$&$43$&$45$&$47$&$49$&$51$&$53$&$55$&$57$&$59$\\\hline
$n(l)$&$233$&$2$&$34$&$285$&$3$&$317$&$335$&$2$&$367$&$182$&$3$&$419$&$72$&$4$&$469$\\\hline
\end{tabular}
\vspace{0.75em}
\end{center}

A complete classification of the $l$-reflexive polygons up to index $200$ is available online via the Graded Ring Database:
\begin{center}
{\rm\href{http://grdb.lboro.ac.uk/forms/toriclr2}{\texttt{http://grdb.lboro.ac.uk/forms/toriclr2}}}
\end{center}
\end{thm}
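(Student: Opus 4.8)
This theorem is a computational classification result, and the plan is to turn Corollary~\ref{cor:reflexive} into a finite, effective enumeration. By that corollary every $l$-reflexive polygon is obtained from one of the sixteen reflexive polygons by a change of the underlying lattice: concretely, if $P\subset\NQ$ is $l$-reflexive then the sublattice $N'\subseteq N$ generated by $\V{P}$ has index $l$ and $P$ is reflexive with respect to $N'$. Turning this around, if $Q\subset L_\Q$ is a reflexive polygon then every $l$-reflexive polygon arises (up to isomorphism) as $Q$ itself, now regarded as a polytope in $N_\Q$ for some \emph{superlattice} $N\supseteq L$ of index $l$, provided $Q$ still has primitive vertices and all its local indices with respect to $N$ equal $l$. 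So the first step is simply to fix representatives $Q_1,\dots,Q_{16}\subset\Z^2$ for the sixteen reflexive polygons.

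For a fixed odd $l$, the second step is to enumerate all superlattices $N$ of $\Z^2$ of index $l$. There are $\sum_{d\mid l}d$ of them, each with a unique Hermite-normal-form generator matrix, so this list is short and immediate to produce. For each of the $16\sum_{d\mid l}d$ pairs $(Q_i,N)$ one tests $l$-reflexivity with respect to $N$: condition (i), that $\orig\in\intr{Q_i}$, is automatic; condition (ii), that each vertex of $Q_i$ is primitive in $N$, must be checked because primitivity can be destroyed in a strictly larger lattice; and condition (iii), that the local index $l_F$ of each facet $F$ computed in $N$ equals $l$ — equivalently, that the primitive outer normal $u_F\in M_N\subseteq M_L$ is exactly $l$ times the original ($L$-primitive) normal. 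Collecting the pairs that pass all three tests yields a finite list which, by Corollary~\ref{cor:reflexive}, contains a representative of every isomorphism class of $l$-reflexive polygon.

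The third step is de-duplication. Two surviving pairs $(Q_i,N)$ and $(Q_j,N')$ give isomorphic $l$-reflexive polygons precisely when some affine lattice isomorphism carries one onto the other; since both polygons contain $\orig$ in their interior (and $\orig$ is the unique lattice point of $\intr{Q}\cap lN$, as noted in Section~\ref{sec:finite}), such a map is forced to be linear, i.e.\ an element of $\GL_2(\Z)$, and one can decide its existence by running over the finitely many possible vertex correspondences. Counting the equivalence classes gives $n(l)$. Carrying this out for each odd $l<60$ produces the displayed table (for even $l$ one has $n(l)=0$ by Proposition~\ref{prop:odd}), and carrying it out for all $l\le 200$ produces the online classification; the {\magma} implementation is given in Appendix~\ref{apdx:source_code}.

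The main obstacle is not any individual step but getting the isomorphism reduction exactly right, so that distinct sublattice presentations of one polygon are identified while genuinely inequivalent polygons are kept apart. A secondary point, underlying the whole reduction, is the assertion from Corollary~\ref{cor:reflexive} that the relevant change of lattice always has index exactly $l$ together with the (routine) verification that the Hermite-normal-form enumeration of index-$l$ superlattices is exhaustive. Once these are in hand the per-$l$ computation is elementary and fast, which is precisely why the classification can be pushed comfortably beyond the range shown in the table.
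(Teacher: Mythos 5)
Your overall strategy is the paper's: reduce to the sixteen $1$-reflexive polygons via a change of lattice, enumerate finitely many index-$l$ lattice extensions in Hermite normal form, test $l$-reflexivity, and de-duplicate by $\GL_2(\Z)$-equivalence. However, there is one concrete slip in your justification of completeness. You assert that for an $l$-reflexive $P$ the sublattice generated by $\V{P}$ has index $l$ and that $P$ is reflexive with respect to it. That is not what Corollary~\ref{cor:reflexive} says, and it is false: the relevant lattice is $\LambdaP$, generated by \emph{all boundary lattice points} of $P$ (equivalently, by the lattice points of any single facet, as shown in the proof of Proposition~\ref{prop:lattice}). The vertex lattice can be strictly smaller — already among the sixteen reflexive polygons, the triangle with vertices $(1,0)$, $(-1,2)$, $(-1,-2)$ has vertex lattice of index $2$ while $l=1$ — and in general its index need not equal $l$ (compare the discussion of $\mult{P}$ versus $[N:\LambdaP]$ in the fake weighted projective space subsection). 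As literally stated, your completeness argument could therefore fail to place a given $l$-reflexive $P$ among the pairs you enumerate. The fix is simply to replace the vertex lattice by $\LambdaP$ throughout: Proposition~\ref{prop:lattice} guarantees $[N:\LambdaP]=l$ and Corollary~\ref{cor:reflexive} that $P$ is $1$-reflexive with respect to $\LambdaP$, after which your enumeration over index-$l$ superlattices is indeed exhaustive.

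Two smaller remarks. First, you run over all $\sum_{d\mid l}d$ superlattices of index $l$; the paper's Corollary~\ref{cor:quotient} shows (using primitivity of a chosen vertex of $Q$ and of $Q^*$) that only the $\phi(l)$ Hermite forms $\left(\begin{smallmatrix}l&i\\0&1\end{smallmatrix}\right)$ with $\gcd{l,i}=1$ can survive, which is what makes the computation to index $200$ cheap and also yields Corollary~\ref{cor:phi}; your coarser list is still finite and correct, just slower. Second, your claim that an isomorphism between two such polygons must fix the origin because $\orig$ is the unique point of $\intr{P}\cap lN$ is not quite an argument (an affine map need not respect the sublattice $lN$); in practice one works with linear isomorphisms of origin-containing polygons, as the {\magma} code does, and this is the convention you should state.
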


Lattice polygons with primitive vertices exhibit peculiar behaviours which have a number-theoretic flavour~\cite{Dai09}. Corollary~\ref{cor:quotient} in Section~\ref{sec:dim2} implies the following upper bound on the growth of $l$-reflexive polygons:

\begin{cor}\label{cor:phi}
There are at most $16(\phi(l)-1)$ isomorphism classes of $l$-reflexive polygons, where $\phi$ is Euler's totient function.
\end{cor}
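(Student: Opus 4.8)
The plan is to combine the two ingredients from Section~\ref{sec:dim2}: the structural statement of Corollary~\ref{cor:reflexive}, which produces every $l$-reflexive polygon from one of the sixteen reflexive polygons by a change of the underlying lattice of the type already exhibited for $P_l$, together with the counting statement of Corollary~\ref{cor:quotient}, which controls how many such changes of lattice occur. Once these are in hand the proof is pure book-keeping. (We may assume $l>1$, the case $l=1$ being the classical one, with its sixteen reflexive polygons.) Write $Q_1,\dots,Q_{16}$ for the reflexive polygons up to isomorphism. By Corollary~\ref{cor:reflexive}, every $l$-reflexive polygon $P\subseteq\NQ$ is isomorphic to $\varphi_a(Q_i)$ for some $i\in\{1,\dots,16\}$ and some integer $a$, where $\varphi_a$ denotes right multiplication with $\left(\begin{smallmatrix} l & a\\ 0 & 1\end{smallmatrix}\right)$; equivalently, $P$ is $1$-reflexive with respect to the sublattice $N_0:=\varphi_a(N)\subseteq N$ of index $l$. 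Hence the isomorphism class of $P$, as an $l$-reflexive polygon, is determined by the pair $(i,\bar a)$ with $\bar a$ the residue of $a$ modulo $l$, considered up to the (finitely many) lattice automorphisms of $Q_i$.

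It remains to bound, for each fixed $Q_i$, the number of admissible residues $\bar a$, and this is precisely the content of Corollary~\ref{cor:quotient}, which reads off this datum from the cyclic quotient $N/N_0\cong\Z/l\Z$. Concretely, a vertex $(x,y)$ of $Q_i$ is mapped by $\varphi_a$ to $(lx,ax+y)$, so the requirement that the vertices of $P$ be primitive with respect to $N$ forces $a$ to be coprime to $l$ (read off from any vertex with $x\neq0$) together with conditions of the form ``$ax+y$ coprime to $l$'' from the remaining vertices; likewise, that every local index of $P$ equal $l$ rather than a proper divisor is what excludes the competing Hermite normal forms $\left(\begin{smallmatrix} d & b\\ 0 & e\end{smallmatrix}\right)$ with $de=l$ and $d<l$. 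The conclusion recorded by Corollary~\ref{cor:quotient} is that $\bar a$ must be a unit modulo $l$ and that at least one unit---for example the class of $-1$---is always forbidden, so that for each $Q_i$ there are at most $\phi(l)-1$ admissible residues and hence at most $\phi(l)-1$ isomorphism classes of $l$-reflexive polygons ``of type $Q_i$''. Summing over the sixteen types yields the asserted bound of $16(\phi(l)-1)$.

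I expect the genuine difficulty to lie entirely inside Corollary~\ref{cor:quotient}. The delicate points are: showing that the index-$l$ sublattice $N_0$ is essentially canonical, so that no further choice (a marked vertex or facet of $Q_i$, say) enters the count and the per-polygon bound stays independent of $i$; pinning the Hermite form of $N_0$ down to diagonal $(l,1)$, which plays the primitivity of the vertices against the uniformity of the local indices and the duality of Proposition~\ref{prop:duality}; and extracting the always-forbidden unit. Granted Corollary~\ref{cor:quotient}, the deduction above is immediate. It is worth emphasising that this estimate is very far from sharp: it ignores both the identifications between distinct pairs $(Q_i,\bar a)$ and the fact that for most $Q_i$ no residue $\bar a$ produces an $l$-reflexive polygon at all---compare the values $n(l)$ tabulated in Theorem~\ref{thm:classification}, where already $n(3)=1$ against the bound $16$.
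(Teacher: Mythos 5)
Your overall strategy --- sixteen reflexive polygons times the number of admissible residues supplied by Corollary~\ref{cor:quotient} --- is exactly the route the paper takes (the authors give essentially no more detail than this), but the step that produces the ``$-1$'' in $16(\phi(l)-1)$ is where all the content lies, and there your argument has a genuine gap. Corollary~\ref{cor:quotient} asserts only that $0<i<l$ and $\gcd{i,l}=1$; for $l\geq 2$ that is exactly $\phi(l)$ residues, so the corollary by itself yields only the weaker bound $16\,\phi(l)$. It does not ``record'' that one further unit class is always forbidden, and your proposed universal culprit, the class of $-1$, is in fact not always forbidden. Take $Q=\sconv{(0,1),(-2,1),(1,-2)}$ (a valid normalisation of the dual of the $\Proj(1,2,3)$-triangle, with $(0,1)\in\V{Q}$ and $(0,1)\in\V{Q^\dual}$), $l=5$ and $i=4\equiv -1\modb{5}$: the image of $Q$ under $\left(\begin{smallmatrix}5&4\\0&1\end{smallmatrix}\right)$ is $\sconv{(0,1),(-10,-7),(5,2)}$, whose vertices are primitive and whose primitive facet normals $(-4,5)$, $(-1,5)$, $(3,-5)$ all have height $5$; this is a genuine $5$-reflexive polygon. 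For this $Q$ the forbidden classes turn out to be $i\equiv 1,2,3\modb{5}$, not $i\equiv -1$.

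What is true, and what you still need to prove, is that for each of the sixteen normalised polygons $Q$ at least one of the $\phi(l)$ unit classes is excluded, the class depending on $Q$. One way to see it: write $(e,1)$ for the second vertex of $Q$ on the facet $\{y=1\}$ and $(p,1)$ for the vertex of $Q^\dual$ adjacent to $(0,1)$, so that $1\leq\abs{e},\abs{p}\leq 3$. The vertex $(e,1)$ maps to $(el,ei+1)$, which is imprimitive on the entire unit class $i\equiv -e^{-1}\modb{l}$ provided $\gcd{e,l}=1$; dually, the facet of $P$ whose normal comes from $(p,1)$ has local index $l/\gcd{p-i,l}<l$ on the unit class $i\equiv p\modb{l}$ provided $\gcd{p,l}=1$. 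Since $l$ is odd, both escape routes fail only if $3\mid l$ and $\abs{e}=\abs{p}=3$ simultaneously, and a short volume count shows that no reflexive polygon has a vertex lying on a facet of lattice length $3$ whose dual edge also has lattice length $3$. Some such case analysis (or a direct finite check over the sixteen polygons) is unavoidable; as written, your proposal establishes only the bound $16\,\phi(l)$ and justifies the improvement to $16(\phi(l)-1)$ with a claim that is both unproven and, in the concrete form you give it, false.
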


Table~\ref{tab:3k_growth} illustrates the astonishingly slow growth in the number of non-isomorphic $3k$-reflexive polygons. Proposition~\ref{prop:3k_reflexive} states that these are always self-dual hexagons.

\begin{table}[htdp]
\caption{The number of isomorphism classes of the $3k$-reflexive polygons up to index $111$ (for $k$ odd).}
\centering
\begin{tabular}{|r|c|c|c|c|c|c|c|c|c|c|c|c|c|c|c|c|c|c|c|}\hline
$k$&$1$&$3$&$5$&$7$&$9$&$11$&$13$&$15$&$17$&$19$&$21$&$23$&$25$&$27$&$29$&$31$&$33$&$35$&$37$\\\hline
$n(3k)$&$1$&$1$&$1$&$2$&$2$&$2$&$3$&$2$&$3$&$4$&$3$&$4$&$3$&$5$&$5$&$6$&$5$&$3$&$7$\\\hline
\end{tabular}
\label{tab:3k_growth}
\end{table}

Our observations in the database show that the number of self-dual $l$-reflexive polygons grows very slowly (see Table~\ref{tab:self_dual}). It would be interesting to make this precise.

\begin{table}[htdp]
\caption{The number of isomorphism classes $s(l)$ of the self-dual $l$-reflexive polygons up to index $59$.}
\centering
\begin{tabular}{|r|c|c|c|c|c|c|c|c|c|c|c|c|c|c|c|}\hline
$l$&$1$&$3$&$5$&$7$&$9$&$11$&$13$&$15$&$17$&$19$&$21$&$23$&$25$&$27$&$29$\\\hline
$s(l)$&$4$&$1$&$4$&$3$&$1$&$3$&$7$&$1$&$7$&$5$&$2$&$5$&$6$&$2$&$9$\\\hline\hline
$l$&$31$&$33$&$35$&$37$&$39$&$41$&$43$&$45$&$47$&$49$&$51$&$53$&$55$&$57$&$59$\\\hline
$s(l)$&$7$&$2$&$4$&$11$&$3$&$11$&$9$&$2$&$9$&$8$&$3$&$13$&$6$&$4$&$11$\\\hline
\end{tabular}
\label{tab:self_dual}
\end{table}

\subsection{The ``number $12$''}
Recall that the famous ``number $12$'' property~\cite{Ful93,PRV00,HS02,HS09} states that the sum of the number of boundary lattice points on a $1$-reflexive polygon and the number of boundary lattice points on its dual always equals twelve. In Section 2 we extend this property to the class of reflexive polygons of higher index. While there are only sixteen reflexive polygons of index $1$ up to isomorphisms \cite{PRV00}, there are infinitely many reflexive polytopes of higher index.

\begin{thm}\label{thm:number12}
Let $P$ be a $l$-reflexive polygon. Then 
$$\abs{\bd P \cap N} + \abs{\bd (lP^\dual) \cap M} = 12.$$
\end{thm}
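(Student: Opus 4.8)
The plan is to run the classical Poonen--Rodriguez-Villegas proof of the ``number $12$''~\cite{PRV00} while keeping track of the index $l$. List the lattice points of $\bd P$ in counter-clockwise cyclic order as $v_0,v_1,\dots,v_{a-1}$ (indices mod $a$), where $a:=\abs{\bd P\cap N}$. The first step is to observe that consecutive $v_i,v_{i+1}$ always lie on a common edge $F$ of $P$ --- otherwise the boundary arc between them would contain a vertex of $P$, which is a lattice point, contradicting consecutiveness --- and that, with the counter-clockwise orientation, $\det(v_i,v_{i+1})=l_F=l$. This is a one-line computation: $v_{i+1}-v_i$ is primitive and parallel to $F$, and extending it to a $\Z$-basis $\{v_{i+1}-v_i,\,w\}$ of $N$ with $\pro{u_F}{w}=1$ and expanding $v_i$ in this basis gives $\abs{\det(v_i,v_{i+1})}=\pro{u_F}{v_i}=l$. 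By Proposition~\ref{prop:duality}, $lP^\dual$ is again $l$-reflexive, and the same description applies to it with the roles of vertices and facets exchanged.

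Next I would read off the local ``curvatures''. Since $\det(v_{i-1},v_i)=\det(v_i,v_{i+1})=l$, there is a rational $c_i$ with $v_{i-1}+v_{i+1}=c_iv_i$, and in fact $c_i\in\Z$: either $v_i$ is a vertex of $P$, hence primitive by definition, or $v_i$ lies in the interior of an edge, in which case $v_{i-1},v_i,v_{i+1}$ are collinear with $v_i$ their midpoint and $c_i=2$. If $A_i$ is the integer matrix with columns $v_i,v_{i+1}$, then $A_i^{-1}A_{i+1}=\left(\begin{smallmatrix}0&-1\\1&c_{i+1}\end{smallmatrix}\right)=ST^{\,c_{i+1}}$ with $S=\left(\begin{smallmatrix}0&-1\\1&0\end{smallmatrix}\right)$, $T=\left(\begin{smallmatrix}1&1\\0&1\end{smallmatrix}\right)$; although $\det A_i=l$, telescoping around the loop gives the \emph{integral} relation $\prod_{i=0}^{a-1}ST^{\,c_i}=I$ in $\mathrm{SL}_2(\Z)$. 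A short computation with the pairing between $N$ and $M$ --- using $\det(v_{i-1},v_i)=l$ and $\pro{u_F}{v_i}=\pro{u_{F'}}{v_i}=l$ for a vertex $v_i$ with incident edges $F,F'$ --- shows that the edge $G_i$ of $lP^\dual$ dual to $v_i$ (the one with primitive outer normal $v_i$ and local index $l$) has lattice length exactly $2-c_i$; here $c_i\le 1$ when $v_i$ is a vertex, since $v_i$ is then an extreme point of $P$ whereas $\frac{1}{2}(v_{i-1}+v_{i+1})\in P$, so $c_i\ge 3$ would place $v_i$ on the open segment from $\orig\in\intr{P}$ to a point of $P$, hence in $\intr{P}$.

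Summing lattice lengths over the edges of $lP^\dual$, which are indexed by the vertices of $P$, gives $b:=\abs{\bd(lP^\dual)\cap M}=\sum_{v_i\in\V{P}}(2-c_i)$, so $\sum_{v_i\in\V{P}}(3-c_i)=b+d$ with $d:=\abs{\V{P}}$; as the remaining $a-d$ boundary points are edge-interior with $c_i=2$, we obtain $\sum_{i=0}^{a-1}(3-c_i)=(b+d)+(a-d)=a+b$. It therefore remains to prove $\sum_{i=0}^{a-1}(3-c_i)=12$. The relation $\prod ST^{\,c_i}=I$ alone does not suffice --- it only forces $\sum(3-c_i)\equiv 0\pmod{12}$, and for instance $(ST^{-1})^6=I$ realises the value $24$, the curvature data of a loop winding twice around the origin --- so the convexity of $P$ (equivalently, that the $v_i$ wind around $\orig$ exactly once) must enter, exactly as when $l=1$. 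The cleanest way to invoke this, and the route I would take, is to feed the integers $c_i$ into the recursion $w_{i+1}=c_iw_i-w_{i-1}$ started from $w_0=(1,0)$, $w_1=(0,1)$: since $\prod ST^{\,c_i}=I$ this produces a closed loop with $\det(w_i,w_{i+1})=1$ and the same curvature sequence, which one verifies is the boundary-lattice-point sequence of an honest reflexive polygon $Q$, and then $\sum_i(3-c_i)=\abs{\bd Q\cap N}+\abs{\bd Q^\dual\cap M}=12$ is the classical ``number $12$'' theorem. Making this last realisation step rigorous --- or, equivalently, appealing to the ``change of lattice'' description of $l$-reflexive polygons developed in Section~\ref{sec:dim2} --- is the only real obstacle; everything else is bookkeeping with determinants.
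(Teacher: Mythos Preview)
Your proposal is correct, and it takes a genuinely different route from the paper. The paper's proof is extremely short: Proposition~\ref{prop:lattice} and Corollary~\ref{cor:reflexive} show that $P$, viewed in the sublattice $\Lambda_P$ generated by $\bd P\cap N$, is a $1$-reflexive polygon $Q$ with $\bd Q\cap\Lambda_P=\bd P\cap N$, and that the dual of $Q$ with respect to $\Lambda_P^\dual$ is the $1$-reflexive polygon associated to $lP^\dual$; the classical ``number $12$'' for $Q$ then gives the result immediately. All the curvature bookkeeping you do is bypassed.

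What you do instead is re-run the Poonen--Rodriguez-Villegas mechanism itself: compute the integers $c_i$, show $\prod ST^{c_i}=I$, verify that dual-edge lengths are $2-c_i$, and conclude $a+b=\sum(3-c_i)$. Amusingly, the paper explicitly flags this as an interesting open direction (the sentence after Corollary~\ref{cor:12-thm-loop}). Your ``realisation step'' at the end can be made completely rigorous without invoking the paper's change of lattice at all: since $A_i^{-1}A_{i+1}=ST^{c_{i+1}}$ and $B_i^{-1}B_{i+1}=ST^{c_{i+1}}$ (with $B_i$ the matrix of columns $w_i,w_{i+1}$ and $B_0=I$), telescoping gives $B_i=A_0^{-1}A_i$, hence $w_i=A_0^{-1}v_i$. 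Thus your abstractly-defined $w_i$ are just the $v_i$ seen through the linear map $A_0^{-1}$, so the loop $(w_i)$ is the boundary-lattice-point sequence of the convex polygon $A_0^{-1}P$, which is therefore $1$-reflexive because $\det(w_i,w_{i+1})=1$. In effect your argument \emph{rederives} the change of lattice (indeed it shows $\Lambda_P=\Z v_0+\Z v_1$) as a by-product rather than using it as input. The trade-off: the paper's approach isolates Proposition~\ref{prop:lattice} as a reusable structural statement with several other consequences (Corollary~\ref{cor:quotient}, Proposition~\ref{prop:odd}), while yours gives a more self-contained and explicit argument tailored to the ``number $12$'' itself.
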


Note that $P$ and $l P^\dual$ have the same number of vertices. 

\begin{cor}
Any reflexive polygon of arbitrary index has at most nine boundary points and six vertices.
\end{cor}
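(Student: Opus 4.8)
The plan is to obtain this as an immediate consequence of Theorem~\ref{thm:number12}. First I would record, using Proposition~\ref{prop:duality}, that if $P$ is $l$-reflexive then $lP^\dual$ is again an $l$-reflexive polygon; in particular it is a genuine two-dimensional lattice polytope (it contains $\orig$ in its interior and, by the proposition, has lattice vertices). The face correspondence noted in the proof of Proposition~\ref{prop:duality} — facets of $P$ correspond to vertices of $P^\dual$ and vice versa — shows that $P$ and $lP^\dual$ have the same number of vertices; call this common number $v$. Since any polygon has at least three vertices we have $v\geq 3$, and since every vertex of a lattice polytope is a boundary lattice point we get $\abs{\bd P\cap N}\geq v$ and $\abs{\bd(lP^\dual)\cap M}\geq v$.

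Now I would simply combine these inequalities with Theorem~\ref{thm:number12}: from $12=\abs{\bd P\cap N}+\abs{\bd(lP^\dual)\cap M}\geq 2v$ we conclude $v\leq 6$, which is the stated bound on the number of vertices. For the boundary points, rearranging the identity gives $\abs{\bd P\cap N}=12-\abs{\bd(lP^\dual)\cap M}\leq 12-v\leq 12-3=9$, and by symmetry the same bound holds for $lP^\dual$.

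There is essentially no obstacle once Theorem~\ref{thm:number12} is available; the one point that needs a little care is ensuring that the dual side really contributes at least three boundary lattice points, which is precisely why one passes through Proposition~\ref{prop:duality} rather than arguing with $P^\dual$ directly, since a priori $P^\dual$ need not even be a lattice polytope. I would also add a sentence observing that both bounds are sharp: the self-dual hexagons $P_l$ realise $v=6$ (with six boundary points on each of $P_l$ and $lP_l^\dual$), while a $1$-reflexive triangle together with its dual shows that the value nine for $\abs{\bd P\cap N}$ does occur.
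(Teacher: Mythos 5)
Your proof is correct and follows exactly the route the paper intends: the corollary is an immediate consequence of Theorem~\ref{thm:number12} together with the remark that $P$ and $lP^\dual$ have the same number of vertices (via the facet--vertex correspondence from Proposition~\ref{prop:duality}), giving $2v\leq 12$ and $\abs{\bd P\cap N}\leq 12-3=9$. The sharpness examples you add are accurate and a nice touch, but the core argument matches the paper's.
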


\begin{cor}
Any (possibly singular) toric del Pezzo surface whose automorphism group acts transitively on the set of torus-invariant points has at most six torus-invariant points.
\end{cor}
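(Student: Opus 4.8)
The plan is to translate the statement into the combinatorics of the spanning polygon and then invoke the preceding corollary. Let $X=X_\Sigma$ be a toric del Pezzo surface, so $\Sigma$ is a complete fan in $N_\R\cong\R^2$ and $-K_X$ is ample and $\Q$-Cartier. First I would recall the standard dictionary. The primitive ray generators $v_1,\ldots,v_d\in N$ of $\Sigma$, listed in cyclic order, are precisely the vertices of an LDP-polygon $P:=\conv{v_1,\ldots,v_d}$: ampleness of $-K_X$ forces the $v_i$ into strictly convex position, so $P$ is a lattice polygon with primitive vertices and with the origin in $\intr{P}$, and $\Sigma$ is recovered as its face fan (see~\cite{KKN08}). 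Under this dictionary the maximal cones of $\Sigma$ --- equivalently the torus-fixed points of $X$ --- correspond to the edges of $P$, and since a polygon has as many edges as vertices, the number of torus-invariant points of $X$ equals $\card{\V{P}}$.

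The heart of the matter is to show that the hypothesis forces $P$ to be $l$-reflexive for some $l\in\N$. Since $P$ already has primitive vertices and contains the origin in its interior, it suffices to check that the local indices $l_F$, for $F\in\F{P}$, all coincide, and I would do this by identifying $l_F$ with a local invariant of $X$. Let the edge $F=\conv{v_i,v_{i+1}}$ correspond to the maximal cone $\sigma$, hence to the torus-fixed point $x_\sigma$, and let $u_F\in M$ be the primitive outer normal of $F$. Writing $-K_X=\sum_\rho D_\rho$, only the components $D_{v_i}$ and $D_{v_{i+1}}$ meet the affine chart $U_\sigma$, and by the standard description of torus-invariant Cartier divisors on $U_\sigma$ a multiple $k(-K_X)$ is Cartier in a neighbourhood of $x_\sigma$ exactly when $k\,u_F/l_F\in M$, that is, exactly when $l_F\mid k$ (as $u_F$ is primitive). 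Hence $l_F$ is precisely the local Gorenstein index of $X$ at $x_\sigma$: the least $m\in\N$ for which $mK_X$ is Cartier near $x_\sigma$.

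The local Gorenstein index depends only on the germ $(X,x_\sigma)$. Any automorphism $\phi$ of $X$ satisfies $\phi^*\omega_X\cong\omega_X$, so $\phi^*(mK_X)=mK_X$ as divisor classes; thus if $mK_X$ is Cartier near $x_\sigma$ it is Cartier near $\phi(x_\sigma)$, and the two local Gorenstein indices agree. Therefore the local Gorenstein index is constant on every $\mathrm{Aut}(X)$-orbit of torus-fixed points. By hypothesis there is only one such orbit, so all the $l_F$ take a single common value $l$, and $P$ is $l$-reflexive. By the preceding corollary an $l$-reflexive polygon has at most six vertices, so $X$ has at most six torus-invariant points, as claimed.

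I expect the main obstacle to be pinning down the dictionary cleanly --- above all the identification of $l_F$ with the local Gorenstein index at $x_\sigma$, and the fact that this quantity is preserved by arbitrary, not necessarily torus-equivariant, automorphisms of $X$. The remaining combinatorial bookkeeping is routine given Theorem~\ref{thm:number12} and its corollary.
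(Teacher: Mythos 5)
Your argument is correct and is exactly the intended one: the paper states this corollary without proof as a direct consequence of the preceding vertex bound, and the only content to supply is the dictionary you give --- torus-fixed points correspond to edges of the LDP-polygon, the local index $l_F$ is the local Gorenstein index at the corresponding fixed point, and this index is an invariant of the singularity germ preserved by arbitrary automorphisms, so transitivity forces all $l_F$ equal and $P$ is $l$-reflexive. Nothing is missing.
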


Our proof of Theorem~\ref{thm:number12} involves a purely combinatorial argument which reduces 
the statement to the ``number 12'' property for $1$-reflexive polygons. This classical statement has a very elegant algebro-geometric proof (see~\cite{PRV00}). We wonder whether there is also a direct argument arising from algebraic geometry in the case of $l$-reflexive polygons.

\subsection{Log del Pezzo surfaces}
A log del Pezzo surface $X$ is a normal complex surface with ample $\Q$-Cartier anticanonical divisor $-K_X$ and at worst log terminal singularities. They have been extensively studied by Nukulin, Alexeev, and Nakayama (see, for example,~\cite{AN06,Nak07}). If we restrict our attention to the toric case then there exists a bijective correspondence with certain lattice polygons, called \emph{LDP-polygons}.

A lattice polygon $P$ is said to be an LDP-polygon if:
\begin{enumerate}
\item $P$ contains the origin in its (strict) interior;
\item the vertices of $P$ are primitive.
\end{enumerate}
The LDP-triangles were first studied by Dais in~\cite{Dais07}, and LDP-polygons in~\cite{DN08}. Upper bounds on the volume and number of boundary points of $P$ in terms of the index $l:=\lcm{l_F\mid F\in\F{P}}$, and a technique for classifying the LDP-polygons of given index, were derived in~\cite{KKN08}. Here, $l$ equals the minimum positive integer $k$ such that $-kK_X$ is a Cartier divisor.

Clearly the $l$-reflexive polygons form a special subclass of the LDP-polygons of index $l$. In fact it appears to be relatively unusual for an LDP-polygon to be $l$-reflexive; for example, there are $1142$ LDP-polygons of index $13$~(\cite[Theorem~1.2]{KKN08}), of which only $7\%$ are $l$-reflexive. Furthermore, in contrast with $l$-reflexive polygons, there exist LDP-polygons of even index.

\subsection{Organisation of the paper}
In Section~\ref{sec:dim2} we investigate $l$-reflexive polygons and consider a non-convex generalisation. We prove Theorem~\ref{thm:number12} and Proposition~\ref{prop:odd}, and give an easy classification algorithm leading to Theorem~\ref{thm:classification}. In Section~\ref{sec:examples} we describe higher-dimensional examples and state some open questions.

\section{Dimension two}\label{sec:dim2}
\subsection{$l$-reflexive loops}
There is a generalisation of reflexive polygons due to Poonen and Rodriguez-Villegas~\cite{PRV00}. We give the analogous definition for higher index.

\begin{definition}Let $x_1, \ldots, x_t \in N=\Z^2$ be non-zero lattice points, and define $x_0:=x_t$, $x_{t+1} := x_1$. 
We say $\{x_1, \ldots, x_t\}$ is the set of \emph{boundary lattice points} $\bd P \cap N$ of an \emph{$l$-reflexive loop} $P$ \emph{of length $t$} if the following three conditions are satisfied for each $i=1,\dots,t$:
\begin{enumerate}
\item the lattice point $x_{i+1}-x_i$ is primitive;
\item the determinant of the $2\times 2$-matrix $A_i$ formed by $x_i, x_{i+1}$ equals $\pm l$;
\item if $x_i$ is a \emph{vertex} (i.e.~$x_i \not\in \sconv{x_{i-1},x_{i+1}}$) then it is primitive.
\end{enumerate}
The \emph{length of $P$} is defined as $\sum_{i=1}^t \det(A_i)/l$. The set of \emph{facets} of $P$ is naturally given as the set of line segments between successive vertices. Note that an $l$-reflexive loop may be a non-convex or self-intersecting polygonal loop (see Figure~\ref{fig:reflexive-loop}).

For $i=1, \ldots, t$, let $u_i$ be the primitive outer normal to the segment $\sconv{x_i,x_{i+1}}$. Then 
$$\bigcup_{i=1}^t \sconv{u_i,u_{i+1}} \cap M$$
is the set of boundary lattice points of the \emph{dual $l$-reflexive loop} $l P^\dual$ (see Figure~\ref{fig:reflexive-loop}). We leave it to the reader to check that this is well-defined and duality as in Proposition~\ref{prop:duality} generalizes to this setting. 
\end{definition}

\begin{figure}[htbp]
\centering
\def\svgwidth{6.5cm}
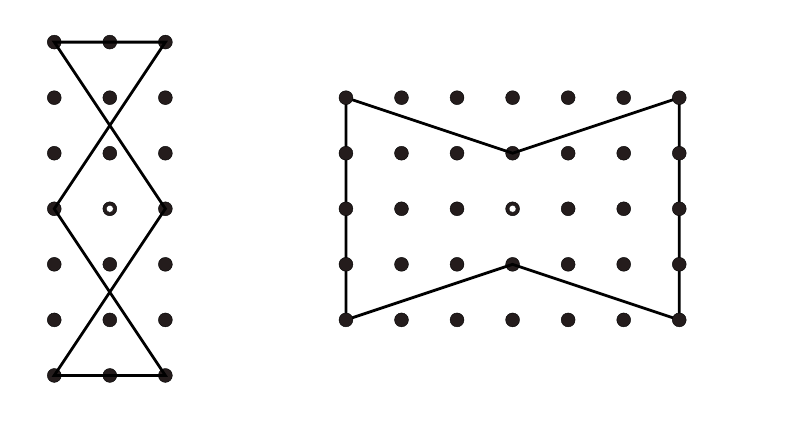
\caption{A $3$-reflexive loop of length $0$ and its dual of length $12$.}
\label{fig:reflexive-loop}
\end{figure}

\subsection{Change of lattice}
Throughout, let $P$ be an $l$-reflexive loop. 

\begin{definition} 
Let $\LambdaP$ be the lattice generated by the boundary lattice points of $P$.
\end{definition}

Here is our main technical result.

\begin{prop}\label{prop:lattice}
Let $P$ be an $l$-reflexive loop. Then 
$$\Lambda_{l P^\dual} = l \Lambdadual_P.$$
Moreover, $\LambdaP \subseteq N$ and $l \Lambdadual_P \subseteq M$ are both lattices of index $l$.
\end{prop}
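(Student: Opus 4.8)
The plan is to work with a fixed $l$-reflexive loop $P$ with boundary lattice points $x_1,\dots,x_t$ and primitive outer normals $u_1,\dots,u_t$, and to exploit the near-symmetry between the two sublattices $\LambdaP \subseteq N$ and $l\Lambdadual_P \subseteq M$. The three claims — that $[N:\LambdaP]=l$, that $[M:l\Lambdadual_P]=l$, and that $\Lambda_{lP^\dual}=l\Lambdadual_P$ — should be proved roughly in that order, with the third being essentially formal once the first is in place.

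First I would show $[N:\LambdaP]=l$. Pick any two consecutive boundary points $x_i,x_{i+1}$; by condition~(ii) in the definition of an $l$-reflexive loop the matrix $A_i$ with rows (or columns) $x_i,x_{i+1}$ has determinant $\pm l$, so $x_i,x_{i+1}$ already generate a sublattice $\Lambda_i\subseteq\LambdaP$ of index $l$ in $N$. It therefore suffices to prove $\LambdaP=\Lambda_i$, i.e.\ that adding all the other boundary points does not enlarge this sublattice. I would do this by induction along the loop: I claim that for each $j$, $x_{j+1}\in\Lambda$ where $\Lambda:=\langle x_{j-1},x_j\rangle$, provided $\det(x_{j-1},x_j)=\pm l$. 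Indeed $x_{j-1},x_j$ form a basis of $\Lambda$, so write $x_{j+1}=a\,x_{j-1}+b\,x_j$ with $a,b\in\Q$; expanding the two determinants $\det(x_j,x_{j+1})=\pm l$ and (if $x_j$ is a vertex) using primitivity of $x_{j+1}-x_j$, I can solve for $a$ and $b$. Concretely $\det(x_j,x_{j+1})=a\det(x_j,x_{j-1})=\mp a l$, forcing $a=\pm1\in\Z$; then $b\,x_j=x_{j+1}-a x_{j-1}$, and since $\det(x_{j-1},x_j)=\pm l\neq 0$ one sees $b\in\Z$ as well (e.g.\ by pairing with a dual vector, or by observing $b\det(x_{j-1},x_j)=\det(x_{j-1},x_{j+1})\in l\Z$). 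Hence $x_{j+1}\in\langle x_{j-1},x_j\rangle=\Lambda_{j-1}$, and also $\det(x_j,x_{j+1})=\pm l$ keeps the induction hypothesis alive. Walking around the loop gives $\LambdaP=\Lambda_i$, so $[N:\LambdaP]=l$.

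Next, $[M:l\Lambdadual_P]=l$. The dual $l$-reflexive loop $lP^\dual$ has boundary points including the primitive normals $u_1,\dots,u_t$, and by construction (the normal $u_i$ to $\sconv{x_i,x_{i+1}}$ satisfies $\pro{u_i}{x_i}=\pro{u_i}{x_{i+1}}=l$) duality as in Proposition~\ref{prop:duality} tells us $lP^\dual$ is itself an $l$-reflexive loop whose double dual is $P$. So the result of the previous paragraph, applied to $lP^\dual$ in place of $P$, gives that the lattice generated by its boundary points has index $l$ in $M$ — but I must be careful that this generated lattice is $l\Lambdadual_P$ and not merely $\Lambda_{lP^\dual}$; this is exactly the third claim, so logically I should establish $\Lambda_{lP^\dual}=l\Lambdadual_P$ first and then deduce the index statement for it. For the identity: each $u_i$ lies in $M$ and pairs integrally (indeed, as $0$ or $l$) with every $x_j$ — pairing is $l$ on the two endpoints of its own facet, and for the other boundary points one checks the pairing is an integer multiple of something, using that $x_j \in \LambdaP$ and each $x_j$ is an integral combination of a consecutive pair on whose facet... actually the cleanest route is: $u_i\in\Lambdadual_P$ would need $\pro{u_i}{x_j}\in\Z$ for all $j$, which is automatic since $u_i\in M$ and $x_j\in N$; rather the point is $\tfrac{1}{l}u_i \notin \Lambdadual_P$ in general, whereas $u_i$ itself, divided by $l$, is a vertex of $P^\dual$. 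So I would argue $\Lambda_{lP^\dual}$ is spanned by the $u_i$, that $u_i\in l\Lambdadual_P$ because $\pro{u_i/l}{x_j}\in\tfrac1l\Z$ is not enough — instead $u_i = l\cdot(u_i/l)$ and $u_i/l\in\Lambdadual_P$ since $\pro{u_i/l}{x_j}\in\Z$ for all boundary points $x_j$ (check: $\pro{u_i}{x_j}$ is divisible by $l$ for every $j$, which follows because $x_j$, being in $\LambdaP=\langle x_i,x_{i+1}\rangle$, is a $\Z$-combination of the two endpoints of the $i$-th facet, each paired to $l$ by $u_i$). Conversely any element of $l\Lambdadual_P$ is an $\R$-combination... this containment the other way is where the index count does the work: both $\Lambda_{lP^\dual}$ and $l\Lambdadual_P$ sit inside $M$ with, respectively, index dividing something and index exactly $l$ (the latter because $[M:\Lambdadual_P]=[N:\LambdaP]=l$ by duality of sublattices of index $l$, so $[M:l\Lambdadual_P]$... wait $l\Lambdadual_P\subseteq\Lambdadual_P$ with index $l^2$, giving $[M:l\Lambdadual_P]=l^3$ — no: $\Lambdadual_P\supseteq M$ since $\LambdaP\subseteq N$ dualizes to $M\subseteq\Lambdadual_P$ with index $l$, so $l\Lambdadual_P\subseteq lM \subseteq M$... ). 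Let me restate: from $[N:\LambdaP]=l$ we get $\LambdaP^\dual \supseteq N^\dual = M$ with $[\Lambdadual_P : M]=l$; multiplying by $l$, $l\Lambdadual_P \supseteq lM$ and $[l\Lambdadual_P : lM]=l$, while $[M:lM]=l^2$, so $l\Lambdadual_P$ has index $l$ in $M$. Then showing $\Lambda_{lP^\dual}\subseteq l\Lambdadual_P$ (done above) plus $\Lambda_{lP^\dual}$ spans $M_\Q$ plus an index computation on the $lP^\dual$ side (its boundary points include $\pm$-type data making its generated lattice of index dividing $l$... indeed applying paragraph two's bound to the loop $lP^\dual$ shows $[M:\Lambda_{lP^\dual}]$ equals $l$) forces equality $\Lambda_{lP^\dual}=l\Lambdadual_P$.

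The main obstacle, and the step to execute most carefully, is the divisibility bookkeeping in the middle of the argument: proving that each primitive normal $u_i$, when divided by $l$, lands in $\Lambdadual_P$ (equivalently $\pro{u_i}{x_j}\equiv 0 \pmod l$ for all boundary points $x_j$, not just the two on its own facet), together with the complementary containment $l\Lambdadual_P \subseteq \Lambda_{lP^\dual}$. Both are driven by the single clean fact from paragraph two — that $\LambdaP$ is generated by any one consecutive pair of boundary points and has index exactly $l$ in $N$ — so the real work is setting up that inductive walk around the loop correctly, handling the vertex/non-vertex cases of condition~(iii), and making sure the orientation signs $\det(A_i)=\pm l$ never cause trouble. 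Once that index-$l$ statement is nailed down, applying it verbatim to the dual loop and dualizing lattices of index $l$ closes everything.
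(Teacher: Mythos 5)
Your first step --- showing that $\LambdaP$ is generated by any single consecutive pair $x_i,x_{i+1}$ and hence has index $l$ in $N$ --- is sound and is a genuinely different route from the paper, which instead proves the divisibility statements $\pro{u_F}{x}\in l\Z$ by an induction along successive facets. (Your stated justifications for $b\in\Z$ are shaky, though: the observation $b\det(x_{j-1},x_j)=\det(x_{j-1},x_{j+1})\in l\Z$ is circular, since that membership is equivalent to $b\in\Z$, and primitivity of $x_{j+1}-x_j$ is not the relevant hypothesis. What actually works is: if $x_j$ is primitive then $bx_j=x_{j+1}-ax_{j-1}\in N$ forces $b\in\Z$; if $x_j$ is not primitive then by condition~(iii) it is not a vertex, whence $x_{j+1}-x_j=x_j-x_{j-1}$ and $x_{j+1}=2x_j-x_{j-1}$ explicitly.)

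The genuine gap is in the identity $\Lambda_{lP^\dual}=l\Lambdadual_P$. You assert that $\Lambda_{lP^\dual}$ is spanned by the primitive normals $u_i$, and you verify $\pro{u_i}{x_j}\in l\Z$ only for these. But the boundary lattice points of the dual loop are $\bigcup_i\sconv{u_i,u_{i+1}}\cap M$, and the segments $\sconv{u_i,u_{i+1}}$ generally contain lattice points other than their endpoints; these belong to the generating set of $\Lambda_{lP^\dual}$, and the $u_i$ alone can generate a strictly smaller lattice. Already for the $1$-reflexive triangle $\sconv{(1,0),(0,1),(-1,-1)}$ the normals $(1,1),(-2,1),(1,-2)$ generate an index-$3$ sublattice of $M$, whereas $\Lambda_{P^\dual}=M$. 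So your containment $\Lambda_{lP^\dual}\subseteq l\Lambdadual_P$ is established only for the sublattice generated by the $u_i$, and the concluding ``two sublattices of $M$ of the same index $l$ must coincide'' step no longer applies: two index-$l$ sublattices containing a common full-rank sublattice need not be equal. This missing case is exactly what the paper treats in the second paragraph of its proof, where it checks $\pro{x}{y}\in l\Z$ for $x$ and $y$ lying in the relative interiors of facets of $P$ and of $lP^\dual$. Your framework can be repaired --- a dual segment $\sconv{u_i,u_{i+1}}$ has interior lattice points only when $u_i\neq u_{i+1}$, i.e.\ when $x_{i+1}$ is a vertex and hence primitive, and then the primitive step vector $w$ along that segment satisfies $\pro{w}{x_{i+1}}=0$ and $\pro{w}{x_{i+2}}=\pm l$, so $\pro{w}{x_j}\in l\Z$ for all $j$ by your first step --- but as written the argument does not close.
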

\begin{proof}
Let us first show that if $x\in\bd P \cap N$ and $u_F\in\V{l P^\dual}$ then $\pro{u_F}{x}\in l \Z$. We prove this by inductively showing that if $F_1, \ldots, F_s$ is a successive sequence of facets of $P$ (in either clockwise or counter-clockwise order) such that $F_i$ is adjacent to $F_{i-1}$ (for $i = 2, \ldots, s$), and if $F_{s+1}$ is the other facet adjacent to $F_s$, then $\pro{u_{F_1}}{x} \in l \Z$ for any point $x \in F_{s+1} \cap N$. 

By definition we only have to consider $s > 1$. We may assume by a unimodular transformation that $u_{F_1} = (0,1)$. Let $u_{F_{s+1}} = (a,b)$, $x =(c,d) \in F_{s+1} \cap N$, and $v = (k,l) \in \V{F_1 \cap F_2}$. Applying the induction hypothesis to $F_{s+1}, F_s, \ldots, F_2$, yields that $l$ divides $\pro{u_{F_{s+1}}}{v} = a k + b l$. Therefore, $l$ divides $a k$. Since $v$ is primitive, $\gcd{k,l} = 1$, hence, $l$ divides $a$. We know that $\pro{u_{F_{s+1}}}{x} = a c + b d = l$. Therefore, $l$ divides $b d$. Since, $u_{F_{s+1}}$ is primitive, $\gcd{a,b} = 1$, so $\gcd{l,b} = 1$, hence $l$ divides $d = \pro{u_{F_1}}{x}$ as desired.

By symmetry we may assume $x \in \intr{F} \cap N$ and $y \in \intr{G} \cap M$, where $F \in \F{P}$ and $G \in \F{l P^\dual}$. Let $u_F \in \V{l P^\dual}$ and $v_G \in \V{P}$ be the corresponding primitive outer normals. We may again assume that $v_G = (0,1)$. Let $u_F = (a,b)$, $x = (c,d)$, $y = (k,l)$. As we have seen, $l$ divides $\pro{v_G}{u_F} = b$. On the other hand, $l = \pro{u_F}{x} = a c + b d$, so $l$ divides $a c$. Since $\gcd{a,b} = 1$, so $\gcd{a,l} = 1$, hence $l$ divides $c$. Therefore, $l$ divides $c k + d l = \pro{x}{y}$. 

This shows $\Lambda_{l P^\dual} \subseteq l \Lambdadual_P$. To show the converse direction, let $F$ be a facet of $P$. We denote by $\Lambda_F$ the lattice generated by the lattice points in $F$. We may assume that $u_F = (0,1)$ and hence $\Lambda_F$ is generated by $(1,0),(0,l)$. For any boundary lattice point $x$ in $P$, we have shown that $l$ divides $\pro{u_F}{x}$, hence $x \in \Lambda_F$. This proves $\LambdaP = \Lambda_F$. In particular, $\LambdaP \subseteq N$ has index $l$. By symmetry, $\Lambda_{l P^\dual} \subseteq M$ also has index $l$, and we have that $\Lambdadual_P$ is generated by $(1,0)$ and $(0,1/l)$. Therefore $l \Lambdadual_P \subseteq M$ is also a lattice of index $l$. Since $\Lambda_{l P^\dual} \subseteq l \Lambdadual_P$ are sublattices of $M$ of the same index, they are equal.
\end{proof}

\subsection{Applications}
We describe several corollaries to Proposition~\ref{prop:lattice}.

\begin{cor}\label{cor:reflexive}
Let $P$ be an $l$-reflexive loop. Then $P$ is a $1$-reflexive loop with respect to the lattice $\LambdaP$, which we call the \emph{$1$-reflexive loop associated to $P$}. Moreover, its dual $1$-reflexive loop is isomorphic to the $1$-reflexive loop associated to $l P^\dual$.
\end{cor}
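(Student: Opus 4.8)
The plan is to verify the three loop axioms directly for the point set $\bd P\cap N=\{x_1,\dots,x_t\}$ with respect to the sublattice $\LambdaP$, and then to read off the dual statement from Proposition~\ref{prop:lattice}.

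First I would observe that $\bd P\cap\LambdaP=\{x_1,\dots,x_t\}$: each $x_i$ lies in $\LambdaP$ by definition of $\LambdaP$, while $\LambdaP\subseteq N$ forces $\bd P\cap\LambdaP\subseteq\bd P\cap N$. Axioms (i) and (iii) then follow from the elementary fact that a lattice point which is primitive in $N$ and happens to lie in a sublattice $\Lambda\subseteq N$ is automatically primitive in $\Lambda$ (a nontrivial subdivision inside $\Lambda$ would also be one inside $N$); I apply this to each difference $x_{i+1}-x_i$ and to each vertex $x_i$, noting that ``being a vertex'' is a purely convex-geometric condition, hence unchanged by the change of lattice. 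For axiom (ii), Proposition~\ref{prop:lattice} gives $[N:\LambdaP]=l$, so a determinant computed in a basis of $\LambdaP$ is $1/l$ times the same determinant computed in a basis of $N$; hence each $\det(A_i)$ passes from $\pm l$ to $\pm1$, and every facet of $P$ has local index $1$ with respect to $\LambdaP$. (The same scaling shows that the length of $P$ is unchanged.) This establishes the first assertion.

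For the second assertion I would track the primitive outer normals. Let $u_i\in M$ be the primitive outer normal to $\sconv{x_i,x_{i+1}}$ used in the definition of $lP^\dual$, so that $\pro{u_i}{\cdot}\equiv l$ on that segment. Each $u_i$ is a boundary lattice point of $lP^\dual$, hence $u_i\in\Lambda_{lP^\dual}=l\Lambdadual_P$ by Proposition~\ref{prop:lattice}, so $u_i/l\in\Lambdadual_P$; running the ``primitivity descends'' argument inside $M\subseteq\Lambdadual_P$ (again using $\Lambda_{lP^\dual}=l\Lambdadual_P$) shows $u_i/l$ is primitive in $\Lambdadual_P$, and since $\pro{u_i/l}{\cdot}\equiv1$ on the segment it is exactly the primitive outer normal of that facet of the $1$-reflexive loop associated to $P$. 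Consequently the dual $1$-reflexive loop of $(P,\LambdaP)$ sits in $(\LambdaP)^\dual=\Lambdadual_P$ with boundary lattice points $\bigcup_i\sconv{u_i/l,u_{i+1}/l}\cap\Lambdadual_P$. On the other hand, applying the first assertion to the $l$-reflexive loop $lP^\dual$, together with the fact that $\bd(lP^\dual)\cap M=\bigcup_i\sconv{u_i,u_{i+1}}\cap M$ by definition, shows that the $1$-reflexive loop associated to $lP^\dual$ sits in $\Lambda_{lP^\dual}=l\Lambdadual_P$ with boundary lattice points $\bigcup_i\sconv{u_i,u_{i+1}}\cap l\Lambdadual_P$. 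The dilation $y\mapsto y/l$ is a lattice isomorphism $l\Lambdadual_P\xrightarrow{\sim}\Lambdadual_P$ carrying $\sconv{u_i,u_{i+1}}$ to $\sconv{u_i/l,u_{i+1}/l}$, hence carrying one set of boundary lattice points bijectively onto the other; it is therefore the desired isomorphism of $1$-reflexive loops.

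The one place that needs care is the bookkeeping of ambient lattices and of the normalisations of determinants and outer normals under the index-$l$ change of lattice --- in particular, checking that it is $u_i/l$, and not $u_i$, which is primitive with respect to $\Lambdadual_P$, which is exactly the content supplied by Proposition~\ref{prop:lattice}. Everything else reduces to the single observation that primitivity is inherited by sublattices, together with the compatibility of loop-duality with change of lattice noted immediately after the definition of $l$-reflexive loops.
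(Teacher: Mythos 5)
Your proof is correct and rests on exactly the same inputs as the paper's: Proposition~\ref{prop:lattice} (giving $[N:\LambdaP]=l$ and $\Lambda_{lP^\dual}=l\Lambdadual_P$), the fact that primitivity descends to sublattices, and the observation that multiplication by $l$ is a lattice isomorphism $\Lambdadual_P\to\Lambda_{lP^\dual}$ identifying the two dual loops. The only cosmetic difference is that you verify the loop axioms directly on the primal side (rescaling the determinants by $1/l$), whereas the paper checks that the vertices of $P^\dual$ lie in $\Lambdadual_P$ and are primitive there and then invokes the loop version of the duality characterisation of Proposition~\ref{prop:duality}.
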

\begin{proof}
By Proposition~\ref{prop:lattice}, $\Lambda_{l P^\dual}/l = \Lambdadual_P$. Since any vertex of $P^\dual$ is of the form $u/l$ for some (primitive) vertex $u$ of $l P^\dual$, the vertices of $P^\dual$ lie in $\Lambdadual_P$. They are necessarily primitive, since for any vertex $w \in \V{P^\dual}$ there is a vertex $v \in \V{P}$ such that $\pro{w}{v} = 1$. Therefore by Proposition~\ref{prop:duality} $P^\dual$ is a $1$-reflexive loop, say $Q^\dual$, with respect to $\Lambdadual_P$. Moreover, via multiplication by $l$, $Q^\dual$ is isomorphic to $l P^\dual$ with respect to the lattice $\Lambda_{l P^\dual}$.
\end{proof}

Using this we can derive an efficient classification algorithm for $l$-reflexive polygons analogous to an approach by Conrads~\cite{Con02}.

\begin{cor}\label{cor:quotient}
Let $\Rcal$ be a set of representatives of all isomorphism classes of $1$-reflexive polygons (respectively, loops). We may choose any $Q \in \Rcal$ such that $(0,1) \in N$ is a vertex of $Q$ and $(0,1) \in M$ is a vertex of $Q^*$. If $P$ is an $l$-reflexive polygon (respectively, loop) of index $l \geq 2$, then there exists $Q \in \Rcal$ such that $P$ is isomorphic to the image of $Q$ under the map
$$\begin{pmatrix}l & i\\0 & 1\end{pmatrix}$$
for $0 < i < l$ coprime to $l$.
\end{cor}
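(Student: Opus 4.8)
The plan is to exploit Corollary~\ref{cor:reflexive}: the $l$-reflexive loop $P\subseteq\NQ$ becomes a $1$-reflexive loop $\tilde P$ with respect to the sublattice $\LambdaP\subseteq N$, which has index $l$ by Proposition~\ref{prop:lattice}. Thus $\tilde P$ is isomorphic to some $Q\in\Rcal$ via a lattice isomorphism $\LambdaP\cong\Z^2$; after this change of coordinates we may view $Q$ as sitting inside the fixed lattice $\Z^2$, and $P$ itself as sitting in the finer lattice $N\supseteq\Z^2$ of index $l$. So the whole problem becomes: classify, up to $\GL_2(\Z)$ acting on $Q$, the overlattices $N$ of $\Z^2=\LambdaP$ of index $l$ such that $Q$ remains an $l$-reflexive loop with respect to $N$ — equivalently, such that the vertices of $Q$ are primitive in $N$ and every facet of $Q$ still has local index $l$ (with respect to the dual lattice $M\subseteq\Z^2=\Lambdadual_P$, which is then \emph{contained} in $\Z^2$ with index $l$).

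The key structural input is the second half of Proposition~\ref{prop:lattice}, applied in the coordinates normalised so that $(0,1)$ is a vertex of $Q$ on the facet $F$ we distinguish: there we showed $\Lambdadual_P$ is generated by $(1,0)$ and $(0,1/l)$, equivalently $\LambdaP$ is generated by $(1,0)$ and $(0,l)$ relative to a basis of $N$ in which $u_F=(0,1)$. Unwinding this: there is a basis of $N$ with respect to which $\LambdaP=\Z^2$ is the sublattice spanned by $(1,0)$ and $(0,l)$, i.e. $N$ is obtained from $\Z^2$ by adjoining $\tfrac1l(i,1)$ for a unique residue $i\bmod l$. Dually this is exactly the statement that, after the change of coordinates identifying $\LambdaP$ with $\Z^2$, the embedding $\LambdaP\hookrightarrow N$ is given by right multiplication by $\begin{pmatrix}l&i\\0&1\end{pmatrix}$, so that the copy of $Q$ in $\Z^2=N$ is the image $\varphi_i(Q)$ of the representative $Q\in\Rcal$ under this matrix. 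The residual freedom to choose $Q\in\Rcal$ with $(0,1)$ a vertex of $Q$ and $(0,1)$ a vertex of $Q^\dual$ is precisely the freedom used to put both $v_G=(0,1)$ and $u_F=(0,1)$ into standard position simultaneously, which Proposition~\ref{prop:lattice} permits.

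It remains to pin down which $i$ occur: that $0<i<l$ after reducing the residue (the case $i\equiv 0$ is excluded because then $N=\Z^2$ and $l=1$), and that $\gcd(i,l)=1$. The coprimality is where the hypotheses on $P$ really bite. If a prime $p\mid\gcd(i,l)$, then $\tfrac{l}{p}(i,1)=\big(\tfrac{li}{p},\tfrac{l}{p}\big)$ would be a lattice point of $N$ lying on the ray through the vertex $(0,1)\cdot\varphi_i=(i,1)$ — more precisely one checks that $(i,1)$ fails to be primitive in $N$, or equivalently that some facet of $\varphi_i(Q)$ picks up local index dividing $l/p$ rather than $l$, contradicting $l$-reflexivity of $P$. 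I expect this divisibility bookkeeping to be the main obstacle: one must verify carefully, using conditions (ii) and (iii) of the definition of an $l$-reflexive loop (primitivity of vertices and of edge-directions) together with the explicit generators of $\LambdaP$ from Proposition~\ref{prop:lattice}, that non-coprimality of $i$ and $l$ genuinely destroys one of these properties, rather than merely changing the index. Everything else — the reduction to a single facet via a unimodular transformation, the identification of the overlattice with a matrix of the stated shape, the range $0<i<l$ — follows formally from Corollaries~\ref{cor:reflexive} and the generator description in Proposition~\ref{prop:lattice}.
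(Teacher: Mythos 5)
Your strategy coincides with the paper's: pass to the associated $1$-reflexive loop via Proposition~\ref{prop:lattice} and Corollary~\ref{cor:reflexive}, then classify the embedding $\LambdaP\hookrightarrow N$ by a Hermite-normal-form analysis, with the two normalisations of $Q\in\Rcal$ doing the remaining work. However, the two steps you defer are precisely where the content lies, and your stated justifications for both are incorrect. First, the shape $\left(\begin{smallmatrix}l&i\\0&1\end{smallmatrix}\right)$ does \emph{not} follow ``formally'' from the generator description of $\LambdaP$. All Proposition~\ref{prop:lattice} yields is that $N/\LambdaP$ is cyclic of order $l$; after composing with the (arbitrary) unimodular matrix identifying the associated $1$-reflexive loop with the chosen representative $Q$, the Hermite normal form of the resulting map can a priori be any $\left(\begin{smallmatrix}d&i\\0&l/d\end{smallmatrix}\right)$ with $\gcd{d,i,l/d}=1$ --- for instance $\left(\begin{smallmatrix}1&0\\0&l\end{smallmatrix}\right)$, which also has cyclic quotient. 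What excludes $l/d>1$ is the hypothesis that $(0,1)$ is a vertex of $Q$: its image $(0,l/d)$ is then a vertex of an $l$-reflexive loop and hence must be primitive. You never invoke this hypothesis for this purpose; your remark about putting $u_F=(0,1)$ and a vertex $(0,1)$ ``simultaneously into standard position'' conflates the internal coordinate choices in the proof of Proposition~\ref{prop:lattice} with the normalisation of the representative $Q$.

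Second, your coprimality argument fails as stated: the point $(i,1)$ has a coordinate equal to $1$, so it is primitive for \emph{every} $i$; it cannot ``fail to be primitive in $N$'' when $p\mid\gcd{i,l}$. The genuine obstruction --- which you mention only as an unverified ``equivalent'' reformulation --- is that the facet of $QH$, $H:=\left(\begin{smallmatrix}l&i\\0&1\end{smallmatrix}\right)$, arising from the facet of $Q$ with outer normal $(0,1)$ has primitive outer normal $\frac{1}{g}(-i,l)$, where $g:=\gcd{l,i}$, and hence local index $l/g$ rather than $l$. This is exactly where the second normalisation, that $(0,1)$ is a vertex of $Q^*$, is indispensable: the paper carries it out as a Hermite-normal-form computation on the dual side, reducing $l(H^T)^{-1}$ to $\left(\begin{smallmatrix}l/g&j\\0&g\end{smallmatrix}\right)$ and noting that the image $(0,g)$ of the vertex $(0,1)$ of $Q^*$ must be primitive. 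Until one of these computations is done, $\gcd{i,l}=1$ is not established, and the mechanism you propose for it is false.
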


\begin{proof}
By Proposition~\ref{prop:lattice} there is an isomorphism $\Z^2 \to \LambdaP \subseteq \Z^2$ given by right-multiplying an integer $2 \times 2$-matrix $H'$ of determinant $l$. Corollary~\ref{cor:reflexive} yields that $P=Q'H'$ for some $1$-reflexive polygon (equiv.~loop) $Q'$. Thus, by our assumption, there exists $Q \in \Rcal$ and a unimodular $2 \times 2$-matrix $U'$ such that $Q'=QU'$, hence, $U'H'$ maps $Q$ onto $P$. The Hermite normal form theorem yields that there exists a unimodular $2 \times 2$-matrix $U$ such that $H := U' H' U$ is in upper triangular Hermite normal form
$$\begin{pmatrix}d & i\\0 & l/d\end{pmatrix}$$
for $d$ a divisor of $l$, and $0 \leq i < d$. Therefore $Q$ maps via $H$ onto the image $P'$ of $P$ under $U$. Since $(0,1)$ is a vertex of $Q$, the row vector $(0,l/d)$ is a vertex of the $l$-reflexive polygon (equiv.~loop) $P'$ and hence is primitive. Therefore $l/d=1$. 

Let us consider the dual picture. One checks that $l (P')^*$ is equal to the image of $Q^*$ under the matrix 
$$M := l (H^{\tiny{T}})^{-1} = \begin{pmatrix}1 & 0\\-i & l\end{pmatrix}.$$
Define $g:=\gcd{l,i}\ne 0$. There exist unique integers $j,k$ with $0 \leq j < l/g$ such that
\begin{equation}\label{eq:euclid}
-j i+k l=g.
\end{equation}
Therefore, we can define an integer matrix $J$ with $\det(J)=1$ by setting
$$J := \begin{pmatrix}\frac{l}{g} & j \\ \frac{i}{g} & k\end{pmatrix}.$$
Hence
$$M \;\cdot\; J = \begin{pmatrix}\frac{l}{g} & j \\ 0 & g\end{pmatrix} =: K$$
is in Hermite normal form. Therefore $l(P')^*$ is isomorphic to the image of $Q^*$ under the matrix $K$. As above, our assumption that $(0,1)$ is a vertex of $Q^*$ implies that $g=1$, as desired.
\end{proof}

\begin{remark}\label{rem:quotient}
Notice that if
$$H:=\begin{pmatrix}l&i\\0&1\end{pmatrix}\quad\text{ and }\quad K:=\begin{pmatrix}l&j\\0&1\end{pmatrix}$$
are the matrices in Corollary~\ref{cor:quotient} yielding $P$ and $lP^\dual$ then, by equation~\eqref{eq:euclid}, $i j\equiv -1\modb{l}$.
\end{remark}

Since there are only sixteen non-isomorphic reflexive polygons (see, for example,~\cite{PRV00}), this gives a very rapid algorithm for classifying $l$-reflexive polygons. It also explains why Corollary~\ref{cor:phi} holds.

Let us prove Theorem~\ref{thm:number12} in the more general setting of $l$-reflexive loops. Note that $l$-reflexive loops have a well-defined \emph{winding number} $w(P) \in \Z$ (see \cite{PRV00} in the case of $1$-reflexive loops; then apply Corollary~\ref{cor:reflexive}).

\begin{cor}\label{cor:12-thm-loop}
Let $P$ be an $l$-reflexive loop. Then the sum of the length of $P$ and the length of $lP^\dual$ equals $12\, w(P)$.
\end{cor}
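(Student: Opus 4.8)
The plan is to reduce everything to the classical case $l=1$ using Corollary~\ref{cor:reflexive}. Given an $l$-reflexive loop $P$, let $Q$ be the associated $1$-reflexive loop with respect to the finer lattice $\LambdaP$, and let $Q^\dual$ be its dual (with respect to $\Lambdadual_P$). By Corollary~\ref{cor:reflexive}, $lP^\dual$ is isomorphic, via multiplication by $l$, to $Q^\dual$ viewed in $\Lambda_{lP^\dual} = l\Lambdadual_P$. So the first step is to observe that lengths are intrinsic to the loop together with its lattice: the length of $P$ computed in $N$ equals the length of $Q$ computed in $\LambdaP$, and similarly the length of $lP^\dual$ in $M$ equals the length of $Q^\dual$ in $\Lambdadual_P$. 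This needs a short check, since the length is defined via the determinants $\det(A_i)/l$, where the $A_i$ are formed from consecutive boundary points; passing to $\LambdaP$ replaces $l$ by $1$ and simultaneously rescales each determinant by $1/l$ (the index of $\LambdaP$ in $N$), so the ratio is unchanged. The winding number is also unchanged, since it depends only on the combinatorial/topological loop, not the lattice — indeed this is exactly how $w(P)$ was defined, by transporting the $1$-reflexive notion along Corollary~\ref{cor:reflexive}.

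Once lengths and winding number are seen to be lattice-independent in this sense, the corollary becomes the statement that the length of $Q$ plus the length of $Q^\dual$ equals $12\,w(Q)$ for every $1$-reflexive loop $Q$. The second step is therefore to invoke the known ``number $12$'' theorem for $1$-reflexive loops from Poonen--Rodriguez-Villegas~\cite{PRV00}, which is precisely this identity (their generalisation of the polygon statement to loops with a winding number). For the convex case $w(Q)=1$ and one recovers $\abs{\bd Q \cap \LambdaP} + \abs{\bd Q^\dual \cap \Lambdadual_P} = 12$, i.e.\ Theorem~\ref{thm:number12} after identifying boundary lattice points of $Q$ with those of $P$ and of $Q^\dual$ with those of $lP^\dual$ (the latter identification again by Corollary~\ref{cor:reflexive}, noting multiplication by $l$ is a lattice isomorphism $\Lambdadual_P \to \Lambda_{lP^\dual}$).

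I would then spell out the dictionary carefully, since this is where bookkeeping errors creep in. The boundary lattice points of $P$ are the same set of points $x_1,\dots,x_t$ whether we regard them in $N$ or in $\LambdaP$ — they all lie in $\LambdaP$ by Proposition~\ref{prop:lattice} — so $\bd P \cap N = \bd P \cap \LambdaP = \bd Q \cap \LambdaP$. For the dual, the boundary lattice points of $lP^\dual$ in $M$ coincide with those in $\Lambda_{lP^\dual}$ (again by Proposition~\ref{prop:lattice}, since everything in sight lies in that index-$l$ sublattice), and multiplication by $1/l$ carries this bijectively onto the boundary lattice points of $Q^\dual = P^\dual$ in $\Lambdadual_P$. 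Hence $\abs{\bd(lP^\dual) \cap M} = \abs{\bd Q^\dual \cap \Lambdadual_P}$, and the length identity for $Q$ delivers the result.

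The main obstacle is not conceptual but lies in the first step: verifying that ``length'' is genuinely a function of the abstract loop-plus-lattice and that passing to $\LambdaP$ rescales the normalised determinants exactly so as to turn $l$-reflexive data into $1$-reflexive data with the same length. Concretely, one must check that if $A_i$ is the matrix of $x_i, x_{i+1}$ computed in a basis of $N$, then in a basis of $\LambdaP$ its determinant is $\det(A_i)/l = \pm 1$, using that $[N:\LambdaP]=l$; and that the winding number defined through Corollary~\ref{cor:reflexive} is the one appearing in \cite{PRV00}'s loop version of the $12$-theorem. Both are routine once set up correctly, but they are the places where the reduction could silently fail, so I would state them as explicit sublemmas before citing \cite{PRV00}.
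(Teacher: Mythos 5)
Your proposal is correct and follows essentially the same route as the paper: pass to the associated $1$-reflexive loop $Q$ on the lattice $\LambdaP$ via Corollary~\ref{cor:reflexive}, match the boundary data of $P$ with $Q$ and of $lP^\dual$ with $Q^\dual$ using Proposition~\ref{prop:lattice}, and invoke the loop version of the ``number $12$'' theorem from~\cite{PRV00}. Your explicit check that the normalised determinants $\det(A_i)/l$ become $\pm 1$ in $\LambdaP$-coordinates (so that signed length, not just the point count, is preserved) is a point the paper glosses over, but it is the same argument.
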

\begin{proof}
Let $Q$ be the associated $1$-reflexive loop with respect to the lattice $L := \LambdaP$. By~\cite{PRV00} we know that the desired statement holds for the pair $Q$, $Q^\dual$. Let $b_N(P)$ denote the number of boundary lattice points of an $l$-reflexive loop $P$. By definition $b_L(Q) = b_N(P)$, and by Proposition~\ref{prop:lattice} and Corollary~\ref{cor:reflexive}, $b_{L^\dual}(Q^\dual) = b_{l L^\dual}(l Q^\dual) = b_{\Lambda_{l P^\dual}}(l P^\dual) = b_M(l P^\dual)$.
\end{proof}

It would be interesting to prove Corollary~\ref{cor:12-thm-loop} directly by generalising the proof for $1$-reflexive loops as given in~\cite{PRV00}.

Next, we will prove Proposition~\ref{prop:odd} which states that there are no $l$-reflexive polygons of odd index $l$. There is some experimental evidence that this statement should also hold for $l$-reflexive loops. Unfortunately, we do not know yet how to generalise it to the non-convex setting.

\begin{proof}[Proof of Proposition~\ref{prop:odd}]
Assume $l$ is even. Let $F$ be a facet of $P$. We may assume that its vertices are given as $(a,l)$ and $(b,l)$. Since the vertices of $P$ are primitive, $a$ and $b$ are odd. Therefore, the midpoint $\frac{(a,l)+(b,l)}{2}$ of the facet $F$ is a lattice point. 

By symmetry, this shows that any facet of $P$ and of $l P^\dual$ contains an interior lattice point. By Corollary~\ref{cor:reflexive}, this property also holds for $Q$ and $Q^\dual$, where $Q$ is the associated $1$-reflexive polygon. However, by inspecting the list of sixteen isomorphism classes of $1$-reflexive polygons we see that this is not possible.
\end{proof}

\subsection{Fake weighted projective space}
Let $P\subseteq\NQ$ be an $n$-dim\-ensional lattice simplex containing the origin, with primitive vertices $v_0,v_1,\ldots,v_n$. Suppose further that $(\lambda_0,\lambda_1,\ldots,\lambda_n)$ is a positive collection of weights such that $\gcd{\lambda_0,\lambda_1,\ldots,\lambda_n}=1$ and $\lambda_0v_0+\lambda_1v_1+\ldots+\lambda_nv_n=\orig$. Then the projective toric variety associated with the spanning fan of $P$ is called a \emph{fake weighted projective space with weights $(\lambda_0,\lambda_1,\ldots,\lambda_n)$}~\cite{Buc08,Kas08b}.

A familiar example is $\Proj^2/(\Z/3)$, where $\Z/3$ acts via
$$\varepsilon:x_i\mapsto\varepsilon^ix_i\qquad\text{ for }(x_1,x_2,x_3)\in\Proj^2,$$
where $\varepsilon$ is a third root of unity. The corresponding fan has rays generated by $(2,-1),$ $(-1,2),$ and $(-1,-1)$. The convex hull of these generators is a triangle, and their sum is $\orig$; the variety is a fake weighted projective surface with weights $(1,1,1)$.

Let $P\subseteq\NQ$ be the simplex associated with some fake weighted projective space, and let $\LambdaV{P}$ be the sublattice generated by the vertices $\V{P}$ of $P$. A crucial invariant is the index of $\LambdaV{P}$, which we call the \emph{multiplicity of $P$} and denote $\mult{P}$. The following result summarises some of the properties of $\mult{P}$:

\begin{thm}\label{thm:summary_fwps}
Let $X$ be a fake weighted projective space with weights $(\lambda_0,\lambda_1,\ldots,\lambda_n)$, and let $P$ be the associated simplex in $\NQ$. Let $Q\subseteq\NQ$ be the simplex corresponding to $\Proj(\lambda_0,\lambda_1,\ldots,\lambda_n)$. Then:
\begin{enumerate}
\item~\emph{\cite[Proposition~2]{BB92}:} $X=\Proj(\lambda_0,\lambda_1,\ldots,\lambda_n)$ if and only if $\mult{P}=1$.
\item~\emph{\cite[Proposition~4.7]{Con02} and \cite[Theorem~4.8]{Buc08}:} $X$ is the quotient of the weighted projective space $\Proj(\lambda_0,\lambda_1,\ldots,\lambda_n)$ by the action of the finite group $N/\LambdaV{P}$ acting free in codimension one. In particular $\pi_1^1(X)=N/\LambdaV{P}$.
\item~\emph{\cite[Theorem~4.4]{Con02}:} There exists a Hermite normal form $H$ with determinant $\mult{P}$ such that $P=QH$ (up to the action of $GL(n,\Z)$).
\item~\emph{\cite[Corollaries~2.4 and~2.11]{Kas08b}:} If $X$ has at worst canonical singularities then $\Proj(\lambda_0,\lambda_1,\ldots,\lambda_n)$ has at worst canonical singularities and
$$\mult{P}\le\frac{h^{n-1}}{\lambda_1\lambda_2\ldots\lambda_n},\quad\text{ where }h:=\sum_{i=0}^n\lambda_i.$$
\item~\emph{\cite[Corollary~2.5]{Kas08b} and~\cite[Proposition~5.5]{Con02}:} If $X$ is Gorenstein (equiv.~$P$ is $1$-reflexive) then $\Proj(\lambda_0,\lambda_1,\ldots,\lambda_n)$ is Gorenstein (equiv.~$Q$ is $1$-reflexive) and $\mult{P}\mid\mult{Q^\dual}$.
\end{enumerate}
\end{thm}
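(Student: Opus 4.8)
The five statements are each established in the reference attached to them, so the plan is to assemble them, emphasising the single structural fact from which they all descend. Write $\Lambda:=\LambdaV{P}$, a sublattice of $N$ of index $\mult{P}$, and let $\Sigma$ be the spanning fan of $P$, so that $X=X_\Sigma$. The rays of $\Sigma$ pass through the primitive vectors $v_0,\ldots,v_n$, and these generate $\Lambda$; moreover each $v_i$, being primitive in $N$ and lying in $\Lambda$, is also primitive in $\Lambda$. Hence the same cone data $\Sigma$, now considered in the sublattice $\Lambda$, is (after identifying $\Lambda$ with $\LambdaV{Q}$) precisely the fan of $\Proj(\lambda_0,\ldots,\lambda_n)$. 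Thus the inclusion $\Lambda\hookrightarrow N$ is a map of fans and induces a finite surjective toric morphism $\pi\colon\Proj(\lambda_0,\ldots,\lambda_n)\to X$ with Galois group $N/\Lambda$. I would set this up first, since parts (i)--(iii) then follow quickly.

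Indeed, for (i): $\mult{P}=1$ precisely when $v_0,\ldots,v_n$ generate $N$, i.e.\ when $\pi$ is an isomorphism, which is exactly the characterisation in~\cite{BB92}; conversely, for $\Proj(\lambda_0,\ldots,\lambda_n)$ itself the vertices of the defining simplex span the lattice. For (ii): since the $v_i$ are the ray generators of the common fan in both $\Lambda$ and $N$, the torus-invariant prime divisors of $X$ are unramified under $\pi$, so $\pi$ is \'etale in codimension one, the group $N/\Lambda$ acts freely in codimension one, and $\pi_1^1(X)=N/\Lambda$ is then the standard toric computation of~\cite{Con02,Buc08}. For (iii): choosing a $\Z$-basis of $N$ adapted to $\Lambda$ and invoking the Hermite normal form theorem, the inclusion $\Lambda\subseteq N$ is represented, up to $\GL(n,\Z)$, by an upper triangular matrix $H$ with $\det H=[N:\Lambda]=\mult{P}$, and by construction the vertices of $P$ are the $H$-images of the vertices of $Q$, so $P=QH$, giving~\cite[Theorem~4.4]{Con02}.

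For the two substantive parts I would simply invoke~\cite{Kas08b,Con02}. For (v), the claim that $Q$ is $1$-reflexive whenever $P$ is is~\cite[Corollary~2.5]{Kas08b}; it fits the picture above because $\pi$, being finite and \'etale in codimension one, links the Gorenstein property of $X$ to that of its cover. Dualising $\Lambda\subseteq N$ to $M\subseteq\Lambda^\dual$ and transporting this through the reflexive dualities $P\leftrightarrow P^\dual$ and $Q\leftrightarrow Q^\dual$ of Proposition~\ref{prop:duality} then yields the divisibility $\mult{P}\mid\mult{Q^\dual}$, as in~\cite[Proposition~5.5]{Con02}. For (iv), one first observes that $X$ having at worst canonical singularities forces $\Proj(\lambda_0,\ldots,\lambda_n)$ to have at worst canonical singularities, since locally $X$ is a further cyclic quotient of a chart of the cover and passing from a group to a subgroup only relaxes the Reid--Tai age condition; the inequality $\mult{P}\le h^{n-1}/(\lambda_1\cdots\lambda_n)$ with $h=\sum_{i=0}^n\lambda_i$ is then~\cite[Corollaries~2.4 and~2.11]{Kas08b}.

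The main obstacle is part (iv): the bound on $\mult{P}$ is not formal but rests on a delicate lattice-point estimate for canonical weighted-projective simplices, which I would cite from~\cite{Kas08b} rather than reprove. Everything else reduces, as indicated above, to linear algebra over $\Z$ together with the standard correspondence between finite-index sublattices and toric morphisms that are \'etale in codimension one.
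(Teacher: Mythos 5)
Your proposal is correct and matches the paper's treatment: the theorem is stated purely as a survey of known results, with each part attributed to the cited references and no proof given in the text. Your additional glue (the sublattice $\Lambda_{\mathcal{V}(P)}\subseteq N$ of index $\mathrm{mult}\,P$, the induced quotient map \'etale in codimension one, and the Hermite normal form for the inclusion) is accurate and consistent with how the paper uses these facts elsewhere, while the substantive bounds in (iv) and (v) are, as in the paper, deferred to \cite{Kas08b} and \cite{Con02}.
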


Let $Q$ be the polytope associated to weighted projective space $\Proj(\lambda_0,\lambda_1,\ldots,\lambda_n)$. Then the local indices $l_i\mid\lambda_i$, for $0\leq i\leq n$. But the $\lambda_i$ are coprime, hence $\gcd{l_0,l_1,\ldots,l_n}=1$. This gives the following:

\begin{lemma}\label{lem:no_l_reflexive_wps}
Let $P\subset\NQ$ be an $l$-reflexive polygon corresponding to $\Proj(\lambda_0,\lambda_1,\ldots,\lambda_n)$. Then $l=1$ and $\Proj(\lambda_0,\lambda_1,\ldots,\lambda_n)$ is Gorenstein.
\end{lemma}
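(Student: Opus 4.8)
The plan is to reduce the statement to the divisibility $l_i\mid\lambda_i$ for the standard weighted projective simplex, which is recorded in the paragraph immediately preceding the lemma.

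First I would unpack what ``$P$ corresponds to $\Proj(\lambda_0,\ldots,\lambda_n)$'' means: the toric variety of the spanning fan of $P$ is the genuine weighted projective space $X=\Proj(\lambda_0,\ldots,\lambda_n)$, not merely a fake one. By Theorem~\ref{thm:summary_fwps}(i) this is equivalent to $\mult P=1$, i.e.\ the vertices of $P$ generate $N$; and then Theorem~\ref{thm:summary_fwps}(iii) supplies a Hermite normal form $H$ with $P=QH$ and $\det H=\mult P=1$, so that $H$ is unimodular and $P$ is isomorphic to $Q$, the simplex of $\Proj(\lambda_0,\ldots,\lambda_n)$.

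Next comes a short numerical step. Because $P$ is $l$-reflexive, every facet of $P$ has local index $l$; carrying this across the isomorphism $P\cong Q$, every facet of $Q$ has local index $l$, so $l_0=\cdots=l_n=l$. The preamble to the lemma gives $l_i\mid\lambda_i$ for each $i$, while $\gcd{\lambda_0,\ldots,\lambda_n}=1$ by hypothesis; hence $l=\gcd{l_0,\ldots,l_n}$ divides $\gcd{\lambda_0,\ldots,\lambda_n}=1$, forcing $l=1$. Finally, $P$ being $1$-reflexive means that $Q$ is reflexive in the sense of~\cite{Bat94}, which is precisely the condition that $\Proj(\lambda_0,\ldots,\lambda_n)$, the toric variety of its spanning fan, be Gorenstein; alternatively this is immediate from Theorem~\ref{thm:summary_fwps}(v).

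The argument is very short, and the only point that needs care — the ``obstacle'', such as it is — is the first one: one must genuinely use that $P$ corresponds to the weighted projective space itself, since the divisibility $l_i\mid\lambda_i$ borrowed from the preamble was established only for the standard simplex $Q$ and fails in general for the simplex of an arbitrary fake weighted projective space. Once $\mult P=1$ has been secured, everything else follows at once.
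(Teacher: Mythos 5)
Your argument is correct and is essentially the paper's own: the lemma is deduced from the divisibility $l_i\mid\lambda_i$ for the simplex of a genuine weighted projective space together with $\gcd{\lambda_0,\ldots,\lambda_n}=1$, which forces $l=\gcd{l_0,\ldots,l_n}=1$. Your extra care in justifying the identification $P\cong Q$ via $\mult{P}=1$ and Theorem~\ref{thm:summary_fwps} only makes explicit what the paper leaves implicit in the phrase ``corresponding to $\Proj(\lambda_0,\ldots,\lambda_n)$''.
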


Of course this is no longer true for fake weighted projective space. For example, the triangle with vertices $\{(-7,-10),$ $(2,5),$ $(1,0)\}$ is $5$-reflexive, and corresponds to the fake weighted projective surface $\Proj(1,2,3)/(\Z/15)$.

There are clear parallels between these important properties of fake weighted projective space and $l$-reflexive polygons. Let $P\subseteq\NQ$ be an $l$-reflexive polygon, let $\LambdaP$ denote the lattice generated by the edges of $P$, and let $Q$ denote the restriction of $P$ to $\LambdaP$. Let $X(P)$ be toric variety generated by the spanning fan of $P$. Corollaries~\ref{cor:reflexive} and~\ref{cor:quotient} can be summarised as follows:
\begin{enumerate}
\item $X(P)$ is Gorenstein if and only if $[N:\LambdaP]=1$.
\item $X(P)$ is the quotient of the Gorenstein surface $X(Q)$ by the action of the finite group $N/\LambdaP$.
\item There exists a Hermite normal form $H$ with determinant $[N:\LambdaP]$ such that $P=QH$ (up to the action of $GL(2,\Z)$).
\end{enumerate}

Proposition~\ref{prop:lattice} tells us that $[N:\LambdaP]=l$, hence we have:

\begin{lemma}
Let $P\subset\NQ$ be an $l$-reflexive polygon corresponding to some fake weighted projective surface. Then $l\mid\mult{P}$.
\end{lemma}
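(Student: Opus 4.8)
The plan is to exploit the containment of the three lattices naturally attached to $P$. Write $\LambdaV{P}\subseteq N$ for the sublattice generated by the vertices of $P$, so that $\mult{P}=[N:\LambdaV{P}]$ by definition, and $\LambdaP\subseteq N$ for the sublattice generated by the boundary lattice points of $P$; by Proposition~\ref{prop:lattice} this is a finite-index sublattice with $[N:\LambdaP]=l$.

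First I would observe that every vertex of $P$ is a lattice point lying on $\bd P$, hence belongs to the generating set of $\LambdaP$; therefore $\LambdaV{P}\subseteq\LambdaP$. Combined with $\LambdaP\subseteq N$ this gives the tower of finite-index sublattices
$$\LambdaV{P}\ \subseteq\ \LambdaP\ \subseteq\ N,$$
along which the index is multiplicative:
$$\mult{P}=[N:\LambdaV{P}]=[N:\LambdaP]\cdot[\LambdaP:\LambdaV{P}]=l\cdot[\LambdaP:\LambdaV{P}].$$
This is precisely the assertion $l\mid\mult{P}$.

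There is essentially no obstacle here once Proposition~\ref{prop:lattice} is in hand; the only point needing care is to record that $\LambdaP$ is a full-rank sublattice, so that the index arithmetic is meaningful, and this is part of the statement of Proposition~\ref{prop:lattice}. As an optional refinement one can identify the cofactor: if $Q$ denotes the $1$-reflexive polygon associated to $P$ as in Corollary~\ref{cor:reflexive}, then $Q$ and $P$ have the same vertices as point sets, so $\LambdaV{Q}=\LambdaV{P}$ and hence $[\LambdaP:\LambdaV{P}]=\mult{Q}$. One therefore obtains the sharper identity $\mult{P}=l\cdot\mult{Q}$, in the same spirit as the quotient description of $X(P)$ recorded immediately before this lemma.
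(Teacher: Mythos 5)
Your proof is correct and is exactly the argument the paper intends (the lemma is stated there as an immediate consequence of Proposition~\ref{prop:lattice}): since the vertices of $P$ are among its boundary lattice points, $\LambdaV{P}\subseteq\LambdaP\subseteq N$, and multiplicativity of the index together with $[N:\LambdaP]=l$ gives $l\mid\mult{P}$. Your refinement $\mult{P}=l\cdot\mult{Q}$ is also consistent with the paper's quotient description of $X(P)$.
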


This need not be true in higher dimensions (since the index of the sublattice generated by the facets of $P$ need only divide $l$), although we know of no counterexample.

\subsection{Invariants of $l$-reflexive polygons}\label{sec:invariants}
In this section we shall discuss the Ehrhart $h^*$-vector (also known as the Ehrhart $\delta$-vector) and the order of an $l$-reflexive polygon. We begin by noting that, irrespective of dimension, the $h^*$-vector of $P^\dual$ is palindromic~\cite{FK08}. However the $h^*$-vector of $P$ is palindromic if and only if $l=1$.

By Corollary~\ref{cor:reflexive} any statement about boundary lattice points or vertices of a $1$-reflexive polygon also holds for $l$-reflexive polygons. For instance, it is clear that $l$-reflexive polygons have at most $6$ vertices. In any dimension, the normalised volume of an $l$-reflexive polytope is related to the boundary volume via $\Vol{P}=l\Vol{\bd P}$. In the two dimensional case, the normalised volume can be easily computed as $\Vol{P}=lb$, where $b$ is the number of boundary lattice points of $P$ (and, in particular, $3\leq b\leq 9$).

Let $i$ be the number of interior lattice points of $P$. Pick's Theorem yields
$$l b = \Vol{P} = 2 i + b - 2.$$
Hence, $i = \frac{l-1}{2} \ b + 1$. In particular, 
$$\abs{P \cap N} = b + i = \frac{l+1}{2} \ b + 1.$$
Therefore the generating function $\mathrm{Ehr}_P$ enumerating the number of lattice points in multiples of $P$ (see~\cite{Ehr77,Sta80,Sta97}) can be expressed as a rational function of the form
$$\mathrm{Ehr}_P(t):=\sum\limits_{k \geq 0} \card{(k P) \cap N} \, t^k = \frac{h^*_P(t)}{(1-t)^3},$$
where
$$h^*_P(t) = 1 + (\abs{P \cap N} - 3) t + i t^2 = 1 + \left(\frac{l+1}{2} b - 2\right) t + \left(\frac{l-1}{2} b + 1\right) t^2.$$

Let $L_P(m):=\abs{mP\cap N}$ denote the number of lattice points in $P$ dilated by a factor of $m\in\Z_{\ge 0}$. This is known to be a polynomial of degree $d:=\dim{P}$, called the Ehrhart polynomial. The roots of $L_P$ (regarded as a polynomial over $\C$) have been the subject of much study. In particular:

\begin{thm}[\protect{\cite[Theorem~1.5]{HK10b}}, Golyshev's Conjecture]
Let $P$ be a reflexive polytope of dimension at most five whose facets are all unimodular simplices. If $z\in\C$ is a root of $L_P(m)$, then $\Re{z}=-1/2$.
\end{thm}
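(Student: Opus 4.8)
Throughout write $L_P(m)=\sum_{i=0}^{d}h_i^*\binom{m+d-i}{d}$, so $\mathrm{Ehr}_P(t)=h^*_P(t)/(1-t)^{d+1}$. The plan rests on two reductions. \emph{First}, since $P$ is reflexive one has $\intr{kP}\cap N=(k-1)P\cap N$ for every $k\geq 1$, and comparing Ehrhart series gives the functional equation $L_P(m)=(-1)^{d}L_P(-1-m)$. Hence, after the substitution $w:=2m+1$, the polynomial $\widehat L(w):=L_P\!\big((w-1)/2\big)$ is even (resp.\ odd) according as $d$ is even (resp.\ odd), so $\widehat L(w)=w^{\varepsilon}G(w^{2})$ with $\varepsilon\equiv d\pmod 2$ and $G$ a real polynomial of degree $\lfloor d/2\rfloor$. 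A root $z$ of $L_P$ satisfies $\Re{z}=-1/2$ precisely when $(2z+1)^{2}$ is a real root of $G$ in $(-\infty,0]$, so the theorem is equivalent to the assertion that \emph{every root of $G$ is real and non-positive}. \emph{Second}, the same identity yields $\mathrm{Ehr}_{\bd P}(t)=(1-t)\,\mathrm{Ehr}_P(t)=h^*_P(t)/(1-t)^{d}$, while the hypothesis that every facet of $P$ is a unimodular simplex means exactly that the faces of $P$ equip $\bd P$ with a unimodular triangulation, whose Ehrhart series is $h_{\bd P}(t)/(1-t)^{d}$ for the ordinary $h$-polynomial $h_{\bd P}$. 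Comparing, $h^*_P(t)=h_{\bd P}(t)$: the $h^*$-vector of $P$ is the $h$-vector $(1,h_1,h_2,\dots,h_2,h_1,1)$ of the boundary complex of a \emph{simplicial} polytope, hence satisfies the Dehn--Sommerville relations and the (necessity part of the) $g$-theorem.

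For $d\leq 5$ the polynomial $G$ has degree $\lfloor d/2\rfloor\leq 2$, and positivity of its leading coefficient is automatic (it is a positive multiple of $\Vol{P}/d!$). So ``all roots of $G$ real and $\leq 0$'' reduces to a short explicit list of inequalities in $h_1$ (and $h_2$): nothing for $d=1$; the single condition $G(0)\geq 0$, which is linear in $h_1$, for $d\in\{2,3\}$; and the three conditions $G(0)\geq 0$, $G'(0)\geq 0$ and $\mathrm{disc}(G)\geq 0$ for $d\in\{4,5\}$. Carrying out the substitution above makes $G$ explicit in $h_1,h_2$, and the two ``positivity'' conditions follow at once from $h_i\geq 0$ together with an upper bound on $h_1$, i.e.\ on the number of vertices of $P$: a reflexive polygon has at most six vertices, and for $3\leq d\leq 5$ one may invoke the known bound on the number of rays of a smooth toric Fano $d$-fold (the spanning fan of $P$ is smooth), or simply the finiteness of the classification of reflexive polytopes in these dimensions.

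The real obstacle is the discriminant inequality $\mathrm{disc}(G)\geq 0$ for $d\in\{4,5\}$. Unlike the positivity conditions it is \emph{not} a formal consequence of $h_i\geq 0$ and a vertex bound: the region cut out by the $g$-theorem together with such a bound still contains pairs $(h_1,h_2)$ --- for example the Billera--Lee extremal values $h_2=\binom{h_1+1}{2}$ --- for which $\mathrm{disc}(G)<0$, so one genuinely needs to know which $h$-vectors are realised by reflexive polytopes with unimodular facets. Because there are only finitely many such polytopes in each dimension $d\leq 5$ and their $h$-vectors occupy a bounded, explicitly listable set (via the classifications of smooth toric Fano varieties due to Batyrev, Watanabe--Watanabe, Sato and \O bro), the discriminant condition becomes a finite check; this is precisely where the hypothesis $d\leq 5$ enters, and already in dimension $6$ there exist smooth toric Fano polytopes violating the conclusion. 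It would be desirable to replace this finite verification by a conceptual argument --- for instance a direct real-rootedness or total-positivity statement about the $h$-vectors that occur here.
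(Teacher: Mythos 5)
A preliminary remark on provenance: this theorem is not proved in the paper at all --- it is quoted verbatim from \cite{HK10b} purely as motivation for the two-dimensional statements Proposition~\ref{prop:dim_2_Reimannian} and Proposition~\ref{prop:BHW07_generalised}. So there is no in-paper proof to compare against, and your proposal must be judged against the argument of \cite{HK10b} itself. Your reduction is the correct and standard one, and it is essentially the route taken there: reflexivity gives $L_P(m)=(-1)^dL_P(-1-m)$, so in the variable $w=2m+1$ the Ehrhart polynomial becomes $w^{\varepsilon}G(w^2)$ with $\deg G=\lfloor d/2\rfloor\leq 2$, and the conclusion is equivalent to $G$ having only real, non-positive roots; the unimodular-facet hypothesis identifies $h^*_P$ with the $h$-vector of the boundary complex of a simplicial polytope, so that $h_1=V-d$ and the $g$-theorem applies. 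Your dimension-two case is also consistent with Proposition~\ref{prop:dim_2_Reimannian}: the unique exceptional polygon there has edges of lattice length three and is therefore excluded by your hypothesis.

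The genuine gap is exactly where you locate it, but locating it does not fill it: for $d\in\{4,5\}$ the inequality $\mathrm{disc}(G)\geq 0$ \emph{is} the theorem, and your proposal establishes neither that inequality nor the finite verification to which you appeal. Your own observation shows why something beyond formal constraints is needed --- for $d=4$ one computes $384\,G(u)=(2+2h_1+h_2)u^2+(172+28h_1-10h_2)u+(210-30h_1+9h_2)$, and the pair $(h_1,h_2)=(8,36)$ satisfies Casagrande's bound $h_1\leq 2d$, the Macaulay bound $h_2\leq h_1+\binom{h_1}{2}$, and both positivity conditions, yet has $\mathrm{disc}(G)=36^2-4\cdot 54\cdot 294<0$. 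So to complete the proof you must either actually enumerate the pairs $(h_1,h_2)$ arising from the $124$ smooth Fano $4$-polytopes and $866$ smooth Fano $5$-polytopes and check the discriminant for each, or prove a sharper upper bound on $h_2$ in terms of $h_1$ that is specific to reflexive polytopes with unimodular facets. Until one of these is carried out, what you have is a correct reduction to a finite (or combinatorial) problem, not a proof; the positivity conditions you do dispatch, and the claim that dimension six admits counterexamples, are both fine.
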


\begin{thm}[\protect{\cite[Proposition~1.8]{BHW07}}]
Let $P$ be a lattice polytope such that for all roots $z\in\C$ of $L_P(m)$, $\Re{z}=-1/2$. Then, up to unimodular translation, $P$ is an reflexive polytope.
\end{thm}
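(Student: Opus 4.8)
The plan is to convert the hypothesis on the roots of $L_P$ into a functional equation for the Ehrhart polynomial, and then to recognise that functional equation as Hibi's criterion for reflexivity.

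Write $d:=\dim P$ and factor $L_P(m)=a_d\prod_{j=1}^{d}(m-z_j)$ over $\C$, where $a_d>0$ is the volume of $P$ and the $z_j$ are the roots listed with multiplicity. Since $L_P$ has real coefficients, for a non-real root $z=-\tfrac12+it$ the point $-1-z=-\tfrac12-it=\bar z$ is again a root with the same multiplicity, while a real root must equal $-\tfrac12$ and is fixed by $z\mapsto-1-z$. Hence the multiset $\{z_1,\dots,z_d\}$ is invariant under $z\mapsto-1-z$, and therefore
$$L_P(-1-m)=a_d\prod_{j=1}^{d}\bigl(-1-m-z_j\bigr)=(-1)^d a_d\prod_{j=1}^{d}\bigl(m-(-1-z_j)\bigr)=(-1)^d L_P(m).$$

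Next I would feed this into Ehrhart--Macdonald reciprocity, $\abs{\intr{mP}\cap N}=(-1)^d L_P(-m)$ (an identity of polynomials in $m$). Replacing $m$ by $m+1$ and applying the functional equation gives $\abs{\intr{(m+1)P}\cap N}=L_P(m)$ for every $m\ge 0$. The case $m=0$ shows $\abs{\intr{P}\cap N}=L_P(0)=1$, so $P$ has a unique interior lattice point; translate it to $\orig$. Passing to generating functions, $\abs{\intr{(m+1)P}\cap N}=L_P(m)$ reads $\sum_{m\ge 1}\abs{\intr{mP}\cap N}\,t^m=t\,\mathrm{Ehr}_P(t)$, and combining this once more with reciprocity and writing $\mathrm{Ehr}_P(t)=h^*_P(t)/(1-t)^{d+1}$ forces $h^*_P(t)=t^d h^*_P(1/t)$; equivalently the $h^*$-vector of $P$ is palindromic, $h^*_i=h^*_{d-i}$ for all $i$. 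This is a short rational-function computation which I would only indicate. Finally I would invoke Hibi's characterisation of reflexive polytopes: a $d$-dimensional lattice polytope that contains an interior lattice point and whose $h^*$-vector is palindromic is, after a unimodular translation, reflexive. Both hypotheses have just been verified, so $P$ is reflexive up to translation, as claimed.

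I expect the only real obstacle to be this last step: the earlier parts are elementary manipulations, whereas the passage from the numerical palindromy of the $h^*$-vector to the geometric statement that every facet of $P$ lies at lattice distance one from the origin is exactly the content of Hibi's theorem, which must be cited or reproved. A minor bookkeeping point is the reduction to $P$ being full-dimensional in its ambient lattice: otherwise one works inside $\mathrm{aff}(P)$ with the induced sublattice, which is already what $L_P$ detects, and ``reflexive'' is understood there.
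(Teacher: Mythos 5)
The paper does not actually prove this statement---it is quoted verbatim from \cite[Proposition~1.8]{BHW07}---and your argument is correct and is essentially the standard route taken there: conjugation symmetry of the roots about the line $\Re{z}=-1/2$ yields the functional equation $L_P(-1-m)=(-1)^d L_P(m)$, Ehrhart--Macdonald reciprocity converts this into $\abs{\intr{(m+1)P}\cap N}=L_P(m)$ (hence a unique interior lattice point and a palindromic $h^*$-vector), and Hibi's characterisation of reflexive polytopes finishes. The only external input is Hibi's theorem, which you rightly flag as the step that must be cited; everything else checks out.
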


We note the following interesting generalisations to $l$-reflexive polygons:

\begin{prop}\label{prop:dim_2_Reimannian}
Let $P$ be an $l$-reflexive polygon not isomorphic to the convex hull of $\{(-1,-1),$ $(-1,2),$ $(2,-1)\}$ (see Figure~\ref{fig:dual_P2}). If $z\in\C$ is a root of $L_P(m)$, then $\Re{z}=-1/(2l)$.
\end{prop}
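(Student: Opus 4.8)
The plan is to compute the Ehrhart polynomial $L_P(m)$ explicitly, solve the resulting quadratic, and read off exactly when its roots have real part $-1/(2l)$. This will be governed by a single inequality on the number $b:=\abs{\bd P\cap N}$ of boundary lattice points of $P$.

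First I would record that, by the computation of $h^*_P(t)$ in Section~\ref{sec:invariants} (equivalently, by Pick's theorem together with $\Vol{P}=lb$), the Ehrhart polynomial is
$$L_P(m)=\frac{lb}{2}\,m^2+\frac{b}{2}\,m+1 .$$
Its complex roots are the solutions of $lb\,m^2+b\,m+2=0$, namely
$$z=\frac{-1\pm\sqrt{1-8l/b}}{2l}.$$
If $1-8l/b\le 0$ then these two roots are complex conjugates of one another (coinciding in a single real root when $1-8l/b=0$), and in either case $\Re{z}=-1/(2l)$. If instead $1-8l/b>0$, the two roots are real and distinct, so they cannot both equal $-1/(2l)$. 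Hence Proposition~\ref{prop:dim_2_Reimannian} is equivalent to the assertion that $b\le 8l$ for every $l$-reflexive polygon other than the exceptional triangle.

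It remains to verify this bound. By Theorem~\ref{thm:number12} one always has $3\le b\le 9$. If $l\ge 2$ then $8l\ge 16>9\ge b$, so the bound is automatic (we do not even need Proposition~\ref{prop:odd} here). If $l=1$ then $P$ is an ordinary reflexive polygon, and among the sixteen isomorphism classes of reflexive polygons precisely one has $b=9$, namely $(\Proj^2)^\dual\cong\sconv{(-1,-1),(-1,2),(2,-1)}$; all the others satisfy $b\le 8=8l$. For readers who prefer to avoid the classification: $b=9$ forces $\abs{\bd(lP^\dual)\cap M}=3$ by Theorem~\ref{thm:number12}, so $P^\dual$ is a reflexive triangle all of whose edges are primitive, hence isomorphic to the simplex of $\Proj^2$, and therefore $P\cong(\Proj^2)^\dual$.

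The computation itself is short; the only delicate point — and the natural source of the exceptional case — is this last identification of the unique reflexive polygon with nine boundary points. One should also note that the borderline case $b=8l$ (which occurs only for $l=1$, $b=8$) does satisfy the conclusion, since there the quadratic has the real double root $z=-1/(2l)$, so no further polygons need be excluded.
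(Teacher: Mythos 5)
Your proposal is correct and follows essentially the same route as the paper: write $L_P(m)=\tfrac{lb}{2}m^2+\tfrac{b}{2}m+1$, solve the quadratic, and observe that the discriminant condition $b\le 8l$ holds for all $3\le b\le 9$ and $l\ge 1$ except $b=9$, $l=1$, which is exactly the excluded triangle. Your extra classification-free identification of the unique reflexive polygon with nine boundary points is a nice touch that the paper leaves implicit, but the argument is otherwise identical.
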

\begin{proof}
Recall that, in general, if $L_P(m)=c_dm^d+\ldots+c_1m+c_0$ then $c_d=\frac{1}{d!}\Vol{P}$, $c_{d-1}=\frac{1}{2(d-1)!}\Vol{\bd P}$, and $c_0=1$~\cite{Ehr67}. Hence
$$L_P(m)=\frac{lb}{2}m^2+\frac{b}{2}m+1.$$
Let $z\in\C$ be a root of $L_P$. We get
$$z=-\frac{1}{2l}\pm\frac{\sqrt{b^2-8lb}}{2lb}.$$
Since $b^2-8lb\leq 0$ for all $3\leq b\leq 9$ and $l\ge 1$ with the exception of $b=9,$ $l=1$, the result follows.
\end{proof}

\begin{figure}[htbp]
\centering
\includegraphics[scale=0.8]{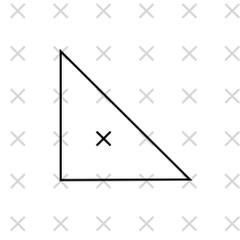}
\caption{The unique exception to Proposition~\ref{prop:dim_2_Reimannian}.}
\label{fig:dual_P2}
\end{figure}

\begin{prop}\label{prop:BHW07_generalised}
Let $P$ be an LDP-polygon of index $l$ such that for all roots $z\in\C$ of $L_P(m)$, $\Re{z}=-1/(2l)$. Then $P$ is an $l$-reflexive polygon.
\end{prop}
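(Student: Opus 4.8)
The plan is to reverse the computation in the proof of Proposition~\ref{prop:dim_2_Reimannian}. Since $P$ is an LDP-polygon of index $l$, write its Ehrhart polynomial as $L_P(m)=\frac{1}{2}\Vol{P}\,m^2+\frac{1}{2}\Vol{\bd P}\,m+1$, using the classical identification of the leading coefficients with normalised volumes. The hypothesis that both complex roots have real part $-1/(2l)$ forces, by comparing with the quadratic formula, the relation $\Vol{P}=l\,\Vol{\bd P}$ (equating the real part $-c_1/(2c_2)$ of the roots with $-1/(2l)$). So the first step is to extract this single numerical identity $\Vol{P}=l\,\Vol{\bd P}$ from the root condition.

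Next I would show that, for an LDP-polygon of index $l$, the identity $\Vol{P}=l\,\Vol{\bd P}$ actually forces every local index $l_F$ to equal $l$, which is exactly $l$-reflexivity (primitivity of the vertices being built into the definition of an LDP-polygon). The point is that $\Vol{\bd P}=\sum_{F\in\F{P}}\Vol{F}$ and $\Vol{P}=\sum_{F\in\F{P}}l_F\,\Vol{F}$, where $\Vol{F}$ is the normalised length of the edge $F$ and $l_F$ its local index; this decomposition comes from triangulating $P$ by coning each facet to the origin. Since $l$ is defined as $\lcm{l_F\mid F\in\F{P}}$, we have $l_F\le l$ for every facet, with at least one facet attaining $l_F=l$. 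Then
$$l\sum_{F}\Vol{F}=\Vol{P}=\sum_{F}l_F\Vol{F}\le l\sum_{F}\Vol{F},$$
so equality holds throughout, which forces $l_F=l$ for every facet $F$ with $\Vol{F}>0$ — i.e.\ for every facet. Hence $P$ is $l$-reflexive.

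I expect the main (and essentially only) obstacle to be the justification of the volume decompositions $\Vol{P}=\sum_F l_F\Vol{F}$ and $\Vol{\bd P}=\sum_F \Vol{F}$ in the correct normalisation. The first is the statement that coning a facet $F$ (at lattice distance $l_F$ from the origin) to $\orig$ contributes normalised volume $l_F\Vol{F}$ to $P$; in dimension two this is transparent since a triangle with base a primitive-length-$\Vol{F}$ segment at height $l_F$ has normalised area $l_F\Vol{F}$, but one should state it cleanly. The second is just additivity of edge lengths along $\bd P$. Once these are in hand the argument is a two-line inequality chase, exactly as above. (One might also remark that the exceptional polygon in Proposition~\ref{prop:dim_2_Reimannian} does not arise here because it is $1$-reflexive and already satisfies $\Vol{P}=\Vol{\bd P}$, consistent with $l=1$.)
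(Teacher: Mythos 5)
Your argument is correct and is essentially the paper's own proof: both extract $\Vol{P}=l\Vol{\bd P}$ from the root condition by comparing coefficients of $L_P(m)=\frac{1}{2}\Vol{P}m^2+\frac{1}{2}\Vol{\bd P}m+1$, and then conclude from $\sum_{F\in\F{P}}(l-l_F)\Vol{F}=0$ together with $l_F\le l$ that every local index equals $l$. The coning decomposition $\Vol{P}=\sum_F l_F\Vol{F}$ that you flag as the one point needing justification is used without further comment in the paper, and your triangle-area remark settles it in dimension two.
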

\begin{proof}
Let $-1/(2l)\pm\alpha i$ be the two roots of $L_P(m)$. Then
\begin{align*}
L_P(m)&=\beta\left(m+\frac{1}{2l}+\alpha i\right)\left(m+\frac{1}{2l}-\alpha i\right)\\
&=\beta m^2+\frac{\beta}{l}m+\beta\left(\frac{1}{4l^2}+\alpha^2\right),
\end{align*}
hence $\beta=(1/2)\Vol{P}$, $\beta\left(1/(4l^2)+\alpha^2\right)=1$, and
$$\Vol{P}=l\Vol{\bd P}.$$
Let $F\in\F{P}$ be an edge, and let $l_F$ be the corresponding local index. The above result tells us that
$$\sum_{F\in\F{P}}(l-l_F)\Vol{F}=0.$$
But $l_F\le l$ for all $F\in\F{P}$, hence $l_F=l$ and so $P$ is $l$-reflexive.
\end{proof}

In~\cite{KKN08} the \emph{order} $o_P$ of a lattice polytope $P \subseteq \NQ$ containing $\orig$ in its interior was defined in the following way: 
$$o_P := \min \left\{k \in \N \;:\; \intr{P/k} \cap N = \left\{\orig\right\}\right\}.$$
As mentioned in Subsection~\ref{sec:finite}, we have $\intr{P/l} \cap N = \left\{\orig\right\}$ for any $l$-reflexive polytope $P \subseteq \NQ$. Hence $o_P \leq l$. In dimension two we can give a sharp upper bound on this invariant. It seems much harder to find a good lower bound for $o_P$. Table~\ref{tab:orders} lists the orders for the $l$-reflexive polygons of index less than $30$.

\begin{prop}\label{prop:order}
Let $P\subseteq\NQ$ be an $l$-reflexive polygon. Then $o_P\leq(l+1)/2$. 
\end{prop}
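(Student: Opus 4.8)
The plan is to turn the statement about $o_P$ into a divisibility statement about the facets of $P$, and then eliminate the bad case using the associated $1$-reflexive polygon.

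First I would reformulate the inequality. Since $l$ is odd (Proposition~\ref{prop:odd}) the number $k:=(l+1)/2$ is an integer, and the case $l=1$ is trivial, so assume $l\ge 3$. By definition $o_P\le k$ means $\intr{P/k}\cap N=\{\orig\}$, i.e.\ there is no $x\in N\setminus\{\orig\}$ with $\pro{u_F}{x}<l/k$ for every facet $F$. As $l/k=2l/(l+1)\in(1,2)$ and $\pro{u_F}{x}\in\Z$, this says $\pro{u_F}{x}\le 1$ for all $F$; and a nonzero $x$ cannot have $\pro{u_F}{x}\le 0$ for all $F$, since otherwise the ray $\R_{\ge 0}x$ would lie in $P$. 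Hence $o_P\le k$ is equivalent to $\{x\in N:\pro{u_F}{x}\le 1\ \forall F\}=\{\orig\}$, that is, to $(lP^\dual)^\dual\cap N=(P/l)\cap N=\{\orig\}$. Since $(P/l)\cap N=\tfrac1l(P\cap lN)$ and we already know $\intr{P/l}\cap N=\{\orig\}$ (Subsection~\ref{sec:finite}), the proposition is equivalent to: $\bd P$ contains no lattice point of $lN$. As the vertices of $P$ are primitive, such a point would necessarily lie in the relative interior of a facet, so it suffices to show that no facet of $P$ contains a point of $lN$ in its relative interior.

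Suppose for contradiction that some facet $F$ does. After an automorphism of $N$ take $u_F=(0,1)$, so $F\subseteq\{x_2=l\}$ with vertices $(a,l),(b,l)$, $a<b$, and $\gcd{a,l}=\gcd{b,l}=1$ by primitivity. By Proposition~\ref{prop:lattice} we have $\LambdaP=\Lambda_F=\langle(1,0),(0,l)\rangle$, and by Corollary~\ref{cor:reflexive} the polygon $Q$ obtained by regarding $P$ inside $\LambdaP$ is $1$-reflexive; the facet $F$ becomes a facet of $Q$ of lattice length $b-a$, which is therefore bounded (indeed $\le 3$, by the classification of the sixteen $1$-reflexive polygons). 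A point $(c,l)\in F\cap lN$ satisfies $a<c<b$ and $l\mid c$; the shears $\begin{pmatrix}1&q\\0&1\end{pmatrix}$ fix $u_F$, $\LambdaP$, $lN$, and the first coordinates modulo $l$ of the lattice points on $\{x_2=l\}$, so after such a shear I may assume $c=0$, which forces $a\in\{-1,-2\}$, $b\in\{1,2\}$, hence $(a,b)\in\{(-1,1),(-1,2),(-2,1)\}$. In particular the associated $1$-reflexive polygon $Q$ has a facet on $\{y_2=1\}$ with these endpoints and with $(0,1)$ in its relative interior.

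To rule out this configuration I would use that the hypothesis that $P$ is $l$-reflexive translates, under the identification of $P$ with $Q\subseteq\LambdaP$, into: every vertex of $Q$ has first coordinate coprime to $l$ (primitivity of the vertices of $P$), and every facet normal of $Q$ has second coordinate coprime to $l$ (all local indices of $P$ equal $l$); in particular $Q$ has no vertex on the $y$-axis and no vertical facet. Running through the sixteen $1$-reflexive polygons, in each of the finitely many positions compatible with the facet found above one encounters one of these obstructions — or, when $l$ shares a small prime with one of the remaining coordinates, a further violated divisibility — contradicting that $P$ is $l$-reflexive. Equivalently this check can be carried out through the explicit description $P\cong QH$, $H=\begin{pmatrix}l&i\\0&1\end{pmatrix}$, of Corollary~\ref{cor:quotient}. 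This proves $o_P\le(l+1)/2$; the bound is sharp, since a direct computation gives $o_{P_l}=(l+1)/2$ for the hexagon $P_l$.

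The routine part is the first two steps (the reduction to ``$\bd P\cap lN=\emptyset$'' and the pinning-down via the change of lattice); the main obstacle I anticipate is the finite verification in the last step, namely showing that no $1$-reflexive polygon supports the pinned-down facet configuration while still lifting to an $l$-reflexive polygon.
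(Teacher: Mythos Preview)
Your first two paragraphs are correct and line up with the paper's opening move: both arguments reduce $o_P>(l+1)/2$ to the existence of some $x\in N$ with $lx$ lying in the relative interior of a facet $F$, and both then pass to the associated $1$-reflexive polygon $Q$ in $\LambdaP$ via Corollary~\ref{cor:reflexive}. From that point the two arguments diverge.

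The paper does \emph{not} run a case analysis over the sixteen reflexive polygons. Instead it chooses any vertex $v\in\V{P}$ with $\pro{u_F}{v}<0$ and applies Lemma~4.1(i) of~\cite{Nil05} (a structural fact about $1$-reflexive polygons, invoked through Corollary~\ref{cor:reflexive}) to the pair of boundary lattice points $v,\,lx$. The lemma yields a vertex $z$ of $F$ that shares a facet $F'$ with $v$ and satisfies $z=v+a\,lx$ for some $a\ge 1$; hence $v+x\in F'\cap N\subseteq\LambdaP$, so $\pro{u_F}{v+x}\in l\Z$ by Proposition~\ref{prop:lattice}, while $\pro{u_F}{v+x}=\pro{u_F}{v}+1$ with $\pro{u_F}{v}\in l\Z$ --- a one-line contradiction. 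Translated into your coordinates, $z=v+a\,lx$ says $z_1=v_1$, so $F'$ is vertical: the paper's lemma proves precisely that your ``no vertical facet'' obstruction always fires, and your hedge about ``a further violated divisibility'' when $l$ shares a small prime is never actually needed.

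Two points to fix in your outline. First, the bound $b-a\le 3$ is incorrect: the reflexive triangle dual to $\Proj(1,1,2)$ has an edge of lattice length $4$, so the cases $(a,b)\in\{(-3,1),(-2,2),(-1,3)\}$ must be added. Second, the finite verification you flag as the main obstacle is indeed finite --- for each of the sixteen polygons, each edge of length $\ge 2$, and each of the at most three shears placing $(0,1)$ in its interior --- and it does go through (every such positioning of $Q$ has a vertex on the $y$-axis or a vertical edge). But establishing that uniformly is exactly what the paper avoids by quoting the~\cite{Nil05} lemma: your route trades one external structural input for a hands-on enumeration, while the paper's route is shorter and manifestly uniform in $l$.
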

\begin{proof}
If $l=1$, then $o_P = 1$. So, let $l > 1$ and assume $x \in \intr{2 P/(l+1)} \cap N$ with $x\ne\orig$. Let $F \in \F{P}$ be such that $x \in \conv{\orig,F}$. Then $\pro{u_F}{x} < 2 l / (l+1) < 2$, hence $\pro{u_F}{x}=1$. Since the vertices of $F$ are primitive we get $l x \in \intr{F} \cap N$, and since $\orig \in \intr{P}$ there exists some $v \in \V{P}$ with $\pro{u_F}{v} < 0$. Therefore $v + l x\ne\orig$ (since $v$ is primitive). By Corollary~\ref{cor:reflexive} we can apply Lemma~4.1(i) in~\cite{Nil05} to the pair $v,lx$ of boundary lattice points. As a consequence, there exists some vertex $z \in \V{F}$ such that $v$ and $z$ lie in a common facet $F' \in \F{P}$. Moreover, either $z = a v + lx$ or $z=v + a lx$ for some $a \geq 1$. Since the first case would imply $\pro{u_F}{v} = 0$, we have $z=v + a lx$. This implies that $v + x \in F' \cap N$. However, Proposition~\ref{prop:lattice} tells us that $l$ divides $\pro{u_F}{v}$, in addition to $\pro{u_F}{v+x} = \pro{u_F}{v} + 1$; a contradiction.
\end{proof}

Note that $o(P_l) = (l+1)/2$, since $(1,0) \in \frac{2}{l} P_l \cap N$.

\begin{table}[htdp]
\caption{The orders $o_P$ of the $l$-reflexive polygons up to index $30$.}
\centering
\begin{tabular}{|r|l||r|l|}\hline
$l$&$o_P$&$l$&$o_P$\\\hline
$1$&$1$&$17$&$4,5,6,9$\\
$3$&$2$&$19$&$3,4,5,7,10$\\
$5$&$2,3$&$21$&$5,11$\\
$7$&$2,3,4$&$23$&$4,5,6,8,12$\\
$9$&$5$&$25$&$4,7,9,13$\\
$11$&$3,4,6$&$27$&$6,14$\\
$13$&$3,4,5,7\phantom{000}$&$29$&$5,6,8,10,15$\\
$15$&$8$&&\\\hline
\end{tabular}
\label{tab:orders}
\end{table}

\subsection{$3k$-reflexive polygons}\label{sec:3k_reflexive}
\begin{prop}\label{prop:3k_reflexive}
Let $P$ be a $3k$-reflexive polygon, where $k$ is an odd positive integer. Then $P$ can be obtained from the $1$-reflexive hexagon $Q:=\mathrm{conv}\{\pm(0,1),$ $\pm(1,1),$ $\pm(1,0)\}$ (in the sense of Corollary~\ref{cor:quotient}). Furthermore, $P\cong 3kP^\dual$.
\end{prop}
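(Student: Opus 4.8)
The plan is to reduce the whole statement, through the change-of-lattice results of Corollaries~\ref{cor:reflexive} and~\ref{cor:quotient}, to the single fact that $P_3$ is the \emph{unique} $3$-reflexive polygon (Theorem~\ref{thm:classification}). The first step I would carry out is to pass $P$ to an intermediate lattice. By Proposition~\ref{prop:lattice} the edge lattice $\LambdaP\subseteq N$ has index $3k$, and its proof shows that, in coordinates adapted to any facet $F$ of $P$ so that the primitive outer normal is $u_F=(0,1)$, one has $\LambdaP=\langle(1,0),(0,3k)\rangle$; in particular $N/\LambdaP$ is \emph{cyclic} of order $3k$. Since $3\mid 3k$, there is then a unique lattice $\Lambda''$ with $\LambdaP\subseteq\Lambda''\subseteq N$ and $[\Lambda'':\LambdaP]=3$ (equivalently $[N:\Lambda'']=k$), and in the coordinates above $\Lambda''=\langle(1,0),(0,k)\rangle$.

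Next I would check that $P$, regarded as a polygon in $\Lambda''$, is $3$-reflexive: the origin is still interior; each vertex of $P$ lies in $\LambdaP\subseteq\Lambda''$ and, being primitive in $N$, stays primitive in the smaller lattice $\Lambda''$; and in the coordinates adapted to a facet $F$ the dual lattice $(\Lambda'')^\dual=\Z\oplus\tfrac1k\Z$ has $(0,1/k)$ as the primitive outer normal of $F$, with $\langle(0,1/k),x\rangle=3k/k=3$ for $x\in F$, so every facet acquires local index exactly $3$. By uniqueness of $P_3$ there is then a lattice isomorphism $\Lambda''\cong\Z^2$ carrying $P$ to $P_3$. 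Since the edge lattice $\LambdaP$, and the $1$-reflexive polygon associated to $P$ in the sense of Corollary~\ref{cor:reflexive}, are computed from the boundary points of $P$ and so do not depend on the ambient lattice, this isomorphism identifies the $1$-reflexive polygon associated to $P$ with the one associated to $P_3$. The latter is the hexagon $Q$: the excerpt's formula $P_3=Q\left(\begin{smallmatrix}3&1\\0&1\end{smallmatrix}\right)$ exhibits $P_3$ as the image of $Q$ under an invertible linear map sending $\Z^2$ onto the edge lattice of $P_3$, so that $P_3$, regarded in its edge lattice, is isomorphic to $Q$. Feeding this into the proof of Corollary~\ref{cor:quotient} forces the representative in $\Rcal$ from which $P$ arises to be (a copy of) $Q$, which is the first assertion: $P\cong QH$ with $H=\left(\begin{smallmatrix}3k&i\\0&1\end{smallmatrix}\right)$ for some $0<i<3k$ coprime to $3k$.

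For the self-duality I would argue by an explicit matrix manipulation with $H$. A direct check on the six vertices gives $Q^\dual=Q\left(\begin{smallmatrix}1&0\\-1&1\end{smallmatrix}\right)$, so the hexagon is self-dual; combining this with the formula $3k(QH)^\dual=Q^\dual\cdot 3k\,(H^{T})^{-1}=Q^\dual\left(\begin{smallmatrix}1&0\\-i&3k\end{smallmatrix}\right)$ used in the proof of Corollary~\ref{cor:quotient} yields $3kP^\dual\cong Q\left(\begin{smallmatrix}1&0\\-1-i&3k\end{smallmatrix}\right)$. The crux is the identity $\left(\begin{smallmatrix}1&0\\-1-i&3k\end{smallmatrix}\right)\left(\begin{smallmatrix}0&1\\-1&0\end{smallmatrix}\right)=\rho H$, where $\rho:=\left(\begin{smallmatrix}0&1\\-1&-1\end{smallmatrix}\right)$ is an order-three lattice automorphism of the hexagon; since $\left(\begin{smallmatrix}0&1\\-1&0\end{smallmatrix}\right)\in\GL(2,\Z)$ and $Q\rho=Q$, right-multiplying by $\left(\begin{smallmatrix}0&1\\-1&0\end{smallmatrix}\right)$ turns $Q\left(\begin{smallmatrix}1&0\\-1-i&3k\end{smallmatrix}\right)$ into $Q\rho H=QH$, whence $3kP^\dual\cong QH\cong P$.

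I expect the main obstacle to be the change-of-lattice step: one must be sure that after enlarging $\LambdaP$ to $\Lambda''$ the local index of \emph{every} facet drops to exactly $3$ — not to $1$, and not to some proper divisor of $3k$ strictly between $3$ and $3k$. Two ingredients make this work and should be spelled out carefully: the coordinate normalisation of Proposition~\ref{prop:lattice}, which is available simultaneously at each facet, and the fact that $\Lambda''$ is \emph{canonically} the preimage in $N$ of the unique subgroup of order $k$ in the cyclic group $N/\LambdaP$, so that the local computation at one facet is literally the same at all of them. After that point the argument is bookkeeping together with the two explicit $2\times2$ identities above; in particular the classification of $1$-reflexive polygons is only invoked through the statements that $P_3$ is the unique $3$-reflexive polygon and that its associated $1$-reflexive polygon is the hexagon $Q$.
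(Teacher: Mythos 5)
Your argument is correct, but it proves the first (and harder) half of the proposition by a genuinely different route from the paper. The paper works directly from Corollary~\ref{cor:quotient}: it runs through the sixteen $1$-reflexive polygons $Q'$ and, for each non-hexagon, either exhibits a vertex of $Q'H$ that is divisible by $3$ (hence not primitive) or derives a contradiction between $i$ and $j$ modulo $3$ using the relation $ij\equiv-1\modb{3k}$ of Remark~\ref{rem:quotient}; only the self-dual hexagon survives. You instead interpolate the canonical lattice $\Lambda''$ with $\LambdaP\subseteq\Lambda''\subseteq N$ and $[N:\Lambda'']=k$ (using that $N/\LambdaP$ is cyclic of order $3k$, which is implicit in the proof of Proposition~\ref{prop:lattice}), check that $P$ becomes $3$-reflexive over $\Lambda''$, and then quote the uniqueness of $P_3$ to identify the associated $1$-reflexive polygon with the hexagon. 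This is cleaner and isolates a fact the paper never states --- an $l$-reflexive polygon is $d$-reflexive over the canonical intermediate lattice of index $l/d$, for every divisor $d$ of $l$ --- but it is not self-contained: the input $n(3)=1$ is, in the paper, obtained from the classification algorithm (a finite check of the $32$ candidates $Q'\left(\begin{smallmatrix}3&i\\0&1\end{smallmatrix}\right)$), whereas the paper's case analysis needs only the classical list of sixteen reflexive polygons. There is no circularity, since Theorem~\ref{thm:classification} does not rely on Proposition~\ref{prop:3k_reflexive}, but you should state explicitly that you are importing a computed classification fact. Your self-duality argument is the paper's computation repackaged: the paper lists the vertices of $QH$ and of $Q^\dual\cdot 3k(H^{T})^{-1}$ and observes they are isomorphic, while you factor the comparison through $Q^\dual=Q\left(\begin{smallmatrix}1&0\\-1&1\end{smallmatrix}\right)$ and the order-three symmetry $\rho$ of the hexagon; both of your matrix identities check out. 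One small slip to fix: in your final paragraph $\Lambda''$ is the preimage of the unique subgroup of \emph{order $3$} (equivalently, index $k$) in the cyclic group $N/\LambdaP$, not of the subgroup of order $k$; your earlier normalisation $\Lambda''=\langle(1,0),(0,k)\rangle$ with $[\Lambda'':\LambdaP]=3$ is the correct one.
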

\begin{proof}
Let $P:=QH$ be a $3k$-reflexive polygon, where
$$H:=\begin{pmatrix}3k&i\\0&1\end{pmatrix},\quad\gcd{3k,i}=1.$$
On the dual side, by Remark~\ref{rem:quotient} we have that $3kP^\dual\cong Q^\dual K$ where
$$K:=\begin{pmatrix}3k&j\\0&1\end{pmatrix},\quad\gcd{3k,j}=1,\quad ij\equiv-1\modb{3k}.$$
Hence if $i\equiv 1\modb{3}$ then $j\equiv 2\modb{3}$, and if $i\equiv 2\modb{3}$ then $j\equiv 1\modb{3}$. We shall consider the sixteen possible choices for $Q$, and exclude all but the self-dual hexagon.

\begin{enumerate}
\item
Suppose (after possible change of basis) that the vertices of $Q$ include the points $(0,1),$ $(2,1),$ and $(-1,-1)$ (i.e.~$Q$ contains the triangle associated with $\Proj(1,1,2)$). Then $(6k,2i+1)$ and $(-3k,-i-1)$ are vertices of $P$. But one of these points must be divisible by $3$, and hence is not primitive. This allows us to exclude the first six polygons in Figure~\ref{fig:1reflexive_a}, along with their duals. Up to isomorphism, this excludes all eight polygons depicted in Figure~\ref{fig:1reflexive_a}.
\item
Now suppose that $\V{Q}$ contains $(0,1),$ $(1,1),$ and $(-1,-2)$ (i.e.~the triangle associated with $\Proj^2$). Then $(3k,i+1)$ and $(-3k,-i-2)$ are vertices of $P$. Once again we see that these cannot both be primitive, excluding the first two polygons in Figure~\ref{fig:1reflexive_b} and their duals. This excludes the four polygons shown in Figure~\ref{fig:1reflexive_b}.
\item
Let $Q:=\sconv{(0,1),(3,1),(-1,-1)}$ be the triangle associated with $\Proj(1,2,3)$. Then $(-3k,-i-1)$ is a vertex of $P$, forcing $i\equiv 1\modb{3}$. The dual $Q^\dual$ has vertices $\{(-2,1),$ $(0,1),$ $(1,-2)\}$; in particular $(3k,j-2)$ is a vertex of $Q^\dual K$, giving $j\equiv 1\modb{3}$. This is a contradiction.
\item
Consider $Q:=\sconv{\pm(0,1),\pm(1,1)}$ (the polygon associated with $\Proj^1\times\Proj^1$). We see that $QH$ has vertex $(3k,i+1)$, giving $i\equiv 1\modb{3}$. On the dual side, $Q^\dual$ has vertices $\{\pm(0,1),$ $\pm(-2,1)\}$. This gives $(-6k,-2j+1)\in\V{Q^\dual K}$, again forcing $j\equiv 1\modb{3}$. This excludes the final two cases.
\end{enumerate}

The only remaining possibility is that $Q$ is the self-dual hexagon with vertices $\{\pm(0,1),$ $\pm(1,1),$ $\pm(1,0)\}$. We show by direct calculation that $P=QH$ is also self-dual. The vertices of $P$ are given by $\{\pm(0,1),$ $\pm(l,i+1),$ $\pm(l,i)\}$, and the vertices of $lP^\dual=Q^\dual l(H^t)^{-1}$ are $\{\pm(i,-l),$ $\pm(i+1,-l),$ $\pm(1,0)\}$. These are clearly isomorphic.
\end{proof}

\begin{figure}[htbp]
\centering
\subfigure[Case~(i)]{\label{fig:1reflexive_a}\includegraphics[scale=0.8]{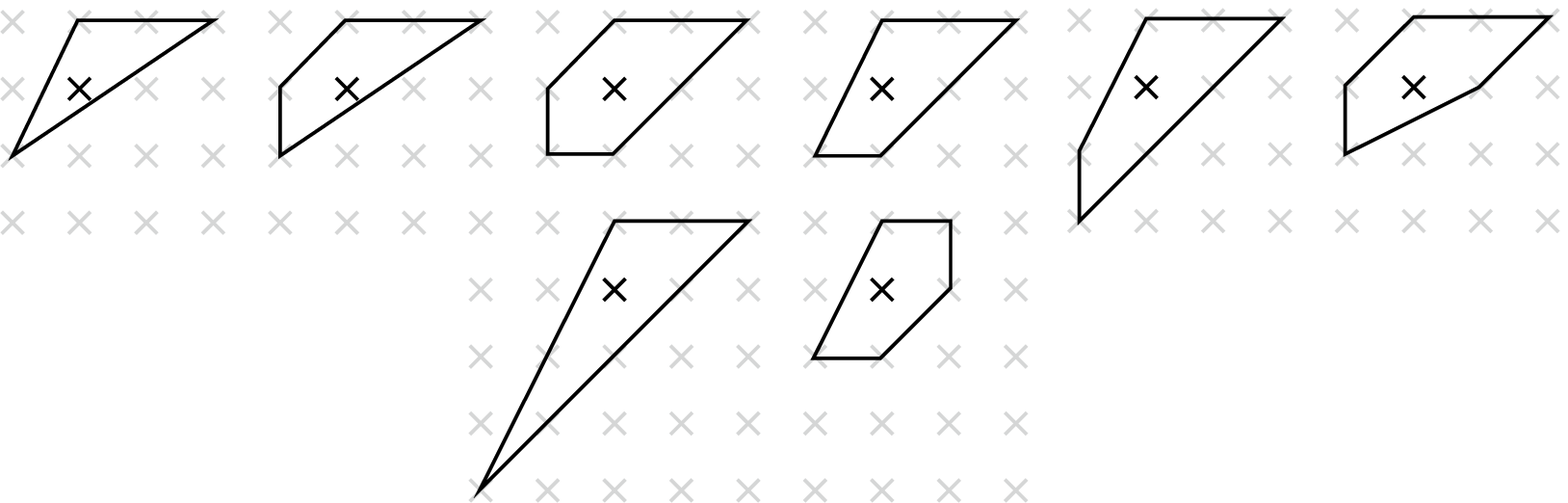}}\\
\subfigure[Case~(ii)]{\label{fig:1reflexive_b}\includegraphics[scale=0.8]{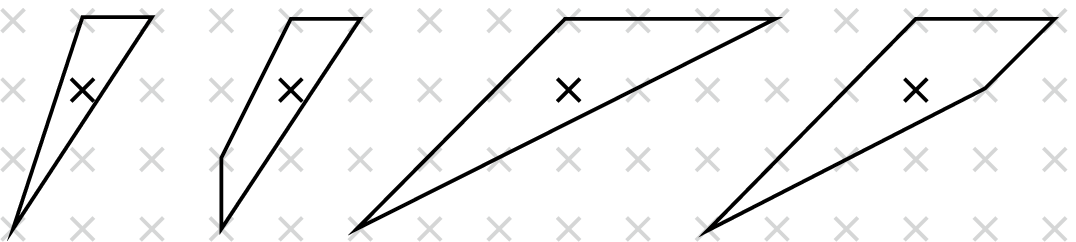}}
\caption{The polygons excluded in cases~(i) and~(ii) of the proof of Proposition~\ref{prop:3k_reflexive}.}
\end{figure}

We conclude this section with one further remark. Clearly any $i$ such that $\gcd{3k,i}=\gcd{3k,i+1}=1$ will give a $3k$-reflexive hexagon, however these need not be distinct. 

Let us make this precise. Note that in order for two such hexagons to be isomorphic it suffices to find a lattice isomorphism mapping the vertices of one edge mutually onto the vertices of another edge. For this we choose two `standard lattices' for each (cone over an) edge of $P$ in such a way that two $3k$-reflexive polytopes $P$ and $P'$ given by $i$ and $i'$ are isomorphic if and only if a standard lattice of an edge of $P$ agrees with a standard lattice of an edge of $P'$. 

Up to central-symmetry we need only consider three edges. We begin with the edge defined by vertices $(3k,i+1)$ and $(0,1)$. By mapping $(3k,i+1)$ to $e_1$ and $(0,1)$ to $e_2$, the lattice $\Z^2$ gets mapped onto 
$$\Z \cdot\frac{1}{3k}(1,-i-1) + \Z \cdot e_2 = \Z \cdot e_1 + \Z\cdot\frac{1}{3k}(h,1),$$
where $h(-i-1)\equiv 1\modb{3k}$. Note that the value of $i$ (respectively, $h$) can be read off uniquely from this `standard lattice'. In the same way, mapping $(3k,i+1)$ to $e_2$ and $(0,1)$ to $e_1$ yields an isomorphism mapping $\Z^2$ onto $\Z \cdot e_1 + \Z \cdot\frac{1}{3k}(-i-1,1) = \Z\cdot\frac{1}{3k}(1,h) + \Z \cdot e_2$, which is just the previous lattice with coordinates switched. Repeating this process for each additional edge, we obtain four more `standard lattices': $\frac{1}{3k}(1,i)+ \Z \cdot e_2 =\Z \cdot e_1+\frac{1}{3k}(-j,1)$ (and its switched lattice), and $\frac{1}{3k}(1,j)+\Z \cdot e_2=\Z \cdot e_1 +\frac{1}{3k}(-i,1)$ (and its switched lattice). Thus we see that two choices $i$ and $i'$ give non-isomorphic hexagons if and only if the sets $\{\pm i,\pm j,-i-1,h\}$ and $\{\pm i',\pm j',-i'-1,h'\}$ are disjoint $\modb{3k}$.

This observation allows very rapid enumeration of the possible $i$. The values for small $k$ are listed in Table~\ref{tab:3k_possible_i}.

\begin{table}[htdp]
\caption{Representatives of the possible choices for $i$ giving non-isomorphism $3k$-reflexive hexagons.}
\begin{center}
\begin{tabular}{|c|l|}
\hline
$k$&$i$\\\hline
$1$&$1$\\
$3$&$1$\\
$5$&$1$\\
$7$&$1,4$\\
$9$&$1,4$\\
$11$&$1,4$\\
$13$&$1,4,16$\\
$15$&$1,7$\\
$17$&$1,4,7$\\
$19$&$1,4,7,10$\\
$21$&$1,4,10$\\
$23$&$1,4,7,19$\\
$25$&$1,7,13$\\\hline
\end{tabular}
\begin{tabular}{|c|l|}
\hline
$k$&$i$\\\hline
$27$&$1,4,7,13,31$\\
$29$&$1,4,7,13,16$\\
$31$&$1,4,7,10,13,25$\\
$33$&$1,4,7,16,28$\\
$35$&$1,16,22$\\
$37$&$1,4,7,10,19,25,31$\\
$39$&$1,4,7,10,16,19$\\
$41$&$1,4,7,10,13,16,25$\\
$43$&$1,4,7,10,19,22,49,52$\\
$45$&$1,7,13,22,31$\\
$47$&$1,4,7,10,16,22,25,40$\\
$49$&$1,4,10,16,19,52,67$\\
$51$&$1,4,7,10,25,28,31,40$\\\hline
\end{tabular}
\end{center}
\label{tab:3k_possible_i}
\end{table}

\section{Examples and open questions}\label{sec:examples}
\subsection{Motivational questions}
Motivated by the positive results in dimension two, there are many natural questions one may ask about $l$-reflexive polytopes in higher dimensions. Which of the results in dimension two extend to higher dimensions? What other properties of $1$-reflexive polytopes can be generalised to higher index? Do the corresponding hypersurfaces have interesting properties -- at least, if the we assume that the ambient space has mild (say, isolated) singularities? What about possible relations to Mirror Symmetry and Calabi-Yau varieties, which spurred the initial interest in $1$-reflexive polytopes~\cite{Bat94}? We remark that Gorenstein polytopes (lattice polytopes where some $r^\text{th}$-multiple is reflexive) may be regarded as being ``$\frac{1}{r}$-reflexive''; they also satisfy a beautiful duality and are related to the construction of mirror-symmetric Calabi-Yau complete intersections~\cite{BB97,BN08}. Can we say something similar about ``$\frac{l}{r}$-reflexive polytopes''?

As we will illustrate below, we cannot expect direct generalisations of most results given in this paper. However, we are convinced that there are many interesting results about (possibly subclasses of) $l$-reflexive polytopes in higher dimensions, and that their study is worthwhile.

\subsection{Any Gorenstein index is possible in dimension three}

Let $P$ be the tetrahedron with vertices $\{(-l,-1,0),$ $(l,0,-1),$ $(0,1,0),$ $(0,0,1)\}$. Then $P^\dual$ is the convex hull of $\{(-2/l,-1,-1),$ $(2/l,-1,-1),$ $(-2/l,3,-1),$ $(2/l,-1,3)\}$. Therefore $P$ is $l$-reflexive if $l$ is odd, and $l/2$-reflexive if $l$ is even. In particular there exist three-dimensional $l$-reflexive polytopes for any index.

\subsection{The edge lattice and the number 24}\label{sec:edge}
The main results in Section~\ref{sec:dim2} fail to hold in dimension three. Perhaps the simplest counterexample is the tetrahedron $P$ with vertices $\{(1,0,0),$ $(3,4,0),$ $(5,0,8),$ $(-9,-4,-8)\}$. This is a $2$-reflexive polytope with $\LambdaP=N$, hence neither Proposition~\ref{prop:lattice} nor Corollary~\ref{cor:reflexive} generalise to higher dimensions.

The toric variety corresponding to $P$ is a fake weighted projective space, $\Proj^3/(\Z/4\times\Z/8)$. By definition we have that $P$ restricted to $\LambdaV{P}$ is the $1$-reflexive simplex associated with $\Proj^3$. The dual polytope $2P^\dual$ is also $2$-reflexive. In this case the corresponding toric variety is $\Proj^3/(\Z/4)$, which has canonical singularities\footnote{ID $547364$ in the classification of toric canonical Fano threefolds~\cite{Kas08a}; see the online \href{http://grdb.lboro.ac.uk/forms/toricf3c}{Graded Ring Database}.}. Note that $o(2P^\dual) = 1$, while the Gorenstein index is $2$. Such behaviour is not possible in dimension two.

In this example, $P$ is $1$-reflexive with respect to the index four sublattice generated by its edges, denoted $\LambdaE{P}$. This restriction gives the polytope $P'$ with vertices $\{(-9,-2,-4),$ $(1,0,0),$ $(3,2,0),$ $(5,0,4)\}$. Similarly, restricting $2P^\dual$ to the index two sublattice $\LambdaE{2P^\dual}$ yields a $1$-reflexive polytope $Q$ with vertices $\{(-1,1,1),$ $(-1,1,2),$ $(-1,3,1),$ $(3,-5,-4)\}$. It is interesting to note that $P'^\dual\cong Q$.

In~\cite[pg.~185]{Snow05} Haase reported the following result, which he attributed to Dais and is a consequence of~\cite{BD96}:

\begin{thm}[\protect{\cite[Theorem~4.3]{Snow05}}]\label{thm:number24}
Let $P\subset\NQ$ be a three-dimensional reflexive polytope. Then:
$$\sum_{E\in\E{P}}\Vol{E}\cdot\Vol{E^\dual}=24,$$
where $E^\dual$ is the edge in $P^\dual$ corresponding to $E$, and $\Vol{E}:=\abs{E\cap N}-1$.
\end{thm}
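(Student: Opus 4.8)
The plan is to recognise $24$ as the topological Euler characteristic of a K3 surface, and then to evaluate that Euler characteristic by a toric stratification in which the edge terms are the only survivors.

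First I would fix the Gorenstein toric Fano threefold $X=X_P$ whose fan is the face fan of $P$, and pass to a projective crepant resolution $\hat X\to X$: cone from $\orig$ a unimodular triangulation of $\bd P$ that uses every point of $\bd P\cap N$, which exists in dimension three. Then $\hat X$ is smooth, and by Batyrev~\cite{Bat94} a general member $\hat Z$ of $\lvert -K_{\hat X}\rvert$ is a smooth K3 surface (it is nondegenerate, has $K_{\hat Z}=0$ by adjunction and crepancy, and is simply connected). In particular $\chi(\hat Z)=24$.

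Next I would compute $\chi(\hat Z)$ combinatorially. The topological Euler characteristic is additive over the torus-orbit stratification of $\hat X$, so $\chi(\hat Z)=\sum_\sigma\chi(\hat Z\cap O_\sigma)$, where $O_\sigma\cong(\C^*)^{3-\dim\sigma}$ is the orbit attached to a cone $\sigma$ of $\hat X$. For general $\hat Z$, each slice $\hat Z\cap O_\sigma$ is a nondegenerate Laurent hypersurface whose Newton polytope is the face of $P^\dual$ dual to the smallest face $\theta(\sigma)$ of $P$ whose cone contains $\sigma$; and a nondegenerate hypersurface in $(\C^*)^k$ with $k\geq 1$ and Newton polytope $Q$ has Euler characteristic $(-1)^{k-1}\Vol{Q}$ if $\dim Q=k$, and $0$ otherwise. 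Running through the cones: the cone $\{\orig\}$ contributes $\Vol{P^\dual}$; a ray through a vertex $v$ of $P$ has dual face a facet $v^\dual$ of $P^\dual$ and contributes $-\Vol{v^\dual}$, so all such rays together contribute $-\sum_{v\in\V{P}}\Vol{v^\dual}=-\Vol{P^\dual}$ (the facets of $P^\dual$ biject with $\V{P}$, and $\sum_{G}\Vol{G}=\Vol{P^\dual}$ over the facets $G$ of $P^\dual$ by reflexivity); a ray through a lattice point interior to an edge or facet of $P$ contributes $0$ (its dual Newton polytope has dimension $<2$); a two-dimensional cone over a segment interior to a facet of $P$ contributes $0$ (dual Newton polytope a point), while a two-dimensional cone lying along an edge $E$ of $P$ has dual Newton polytope $E^\dual$ and contributes $\Vol{E^\dual}$, and there are exactly $\Vol{E}=\abs{E\cap N}-1$ of these along $E$; and the three-dimensional cones are torus-fixed points, which do not lie on $\hat Z$. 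Summing,
\[
24=\chi(\hat Z)=\Vol{P^\dual}-\Vol{P^\dual}+\sum_{E\in\E{P}}\Vol{E}\,\Vol{E^\dual},
\]
which is the assertion.

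The routine part is the final bookkeeping, including the reflexivity identity $\sum_{G}\Vol{G}=\Vol{P^\dual}$ over facets $G$ of $P^\dual$. The substantive points — where I expect the real work to lie — are the two inputs to the stratification step: that a general anticanonical hypersurface is \emph{simultaneously} nondegenerate on every torus orbit, so that Bertini-type genericity and the orbit-by-orbit count apply uniformly and $\hat Z$ really is a smooth K3; and the identification, for each cone $\sigma$, of the Newton polytope of $\hat Z\cap O_\sigma$ with the dual face $\theta(\sigma)^\dual$ of $P^\dual$, combined with the Danilov--Khovanskii Euler-characteristic formula for nondegenerate toric hypersurfaces.
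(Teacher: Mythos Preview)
The paper does not give its own proof of this theorem: it is stated as a quoted result, attributed to Haase and Dais in \cite{Snow05} and described there as a consequence of Batyrev--Dais \cite{BD96}. So there is nothing to compare against in the paper itself.

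Your argument is correct and is essentially the Batyrev--Dais argument that the paper alludes to: realise $24$ as $\chi(\text{K3})$, obtain the K3 as a general anticanonical hypersurface in a crepant toric resolution of $X_P$, and compute its Euler characteristic orbit-by-orbit using the Danilov--Khovanskii formula. The bookkeeping is right, including the reflexivity identity $\Vol{P^\dual}=\sum_{G\in\F{P^\dual}}\Vol{G}$ (each facet sits at lattice distance $1$ from the origin, so the pyramids from $\orig$ over the facets have normalised volume equal to the facet volume), and the count of $\Vol{E}$ two-dimensional cones of the resolved fan lying over each edge $E$. The points you flag as substantive --- simultaneous nondegeneracy on all orbits for a general section, and the identification of the restricted Newton polytope with the dual face --- are exactly the places where one must invoke Batyrev's MPCP framework or the Bernstein--Kushnirenko--Khovanskii theory, and you have identified them correctly.
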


Since $P'^\dual\cong Q$, we observe that a reformulation of Theorem~\ref{thm:number24} holds for the $2$-reflexive polytope $P$ and its dual $2P^\dual$, where we understand $E^\dual$ to mean the edge in $2P^\dual$ corresponding to $E$.

In the example above we noted that $P'^\dual\cong Q$. As a consequence, the ``$24$-property'' holds for the $2$-reflexive polytope $P$. There exist, however, $2$-reflexive polytopes for which the $24$-property does not hold. Consider the simplex $T$ associated with $\Proj(1,2,3,6)$, namely the convex hull of $\{(-2,-3,-6),$ $(1,0,0),$ $(0,1,0),$ $(0,0,1)\}$. Let
$$H:=\begin{pmatrix}
4&0&1\\
0&4&3\\
0&0&1
\end{pmatrix}.$$
The resulting polytope $S:=TH$ with vertices $\{(-8,-12,-17),$ $(4,0,1),$ $(0,4,3),$ $(0,0,1)\}$ is $2$-reflexive. If we restrict $S$ to the index two sublattice $\Lambda_{\E{S}}$, the resulting simplex is \emph{not} a reflexive polytope (it corresponds to fake weighted projective space $\Proj(1,2,3,6)/(\Z/2\times\Z/4)$). In this case the $24$-property does not hold; the sum is $28$.

Finally, we consider one further family of examples: The $l$-reflexive polytopes contained in the classification of all three-dimensional canonical Fano polytopes~\cite{Kas08a}. These are $l$-reflexive polytopes with exactly one interior point; the corresponding toric Fano three-folds have at worst canonical singularities. This contains the standard $4319$ $1$-reflexive polytopes, along with five $2$-reflexive polytopes\footnote{IDs $520134$, $544310$, $544353$, $547354$, and $547364$ in the Graded Ring Database.}, two $3$-reflexive polytopes\footnote{IDs $544385$ and $547369$.}, and one $5$-reflexive tetrahedron\footnote{ID $547383$.}. In every case the polytope restricted to its edge lattice is $1$-reflexive, and the $24$-property holds.

\begin{conjecture}
If $P$ is a three-dimensional $l$-reflexive polytope such that $P$ restricted to $\LambdaE{P}$ is isomorphic to a $1$-reflexive polytope $Q$, then $lP^\dual$ restricted to $\LambdaE{lP^\dual}$ is isomorphic to $Q^\dual$. In particular $P$ satisfies the $24$-property.
\end{conjecture}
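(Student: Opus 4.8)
The plan is to prove the conjecture in two stages: first reduce the $24$-property to the asserted isomorphism $lP^\dual\cong Q^\dual$, and then reduce that isomorphism to a single lattice identity whose nontrivial inclusion is the real obstacle.

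\emph{Stage 1: the $24$-property follows from the isomorphism.} Suppose $lP^\dual$, equipped with the lattice $\LambdaE{lP^\dual}$, is isomorphic to $Q^\dual$. Since $Q$ is a three-dimensional $1$-reflexive polytope, so is $Q^\dual$, and Theorem~\ref{thm:number24} applies to the pair $Q,\,Q^\dual$. Now the edges of $Q$ are literally the edges of $P$, and since $\LambdaE{P}$ is a sublattice of $N$ containing every lattice point on every edge of $P$, for each such edge $E$ one has $E\cap\LambdaE{P}=E\cap N$, so $\Vol{E}$ computed inside $\LambdaE{P}$ agrees with $\abs{E\cap N}-1$. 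The same remark applied to $lP^\dual$, together with the assumed isomorphism, gives $\abs{E^\dual\cap\LambdaE{lP^\dual}}=\abs{E^\dual\cap M}$ for the corresponding edge $E^\dual$ of $lP^\dual$. Matching edges of $P$ to edges of $lP^\dual$ through the shared face lattice then turns the identity of Theorem~\ref{thm:number24} for $Q$ into the $24$-property for $P$.

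\emph{Stage 2: the isomorphism as a lattice identity.} Write $L:=\LambdaE{P}$; this is a sublattice of $N$ generated by lattice points of $P$, and it contains every vertex of $P$ (a vertex being an endpoint of an edge). By hypothesis $(P,L)$ is $1$-reflexive, so every facet $F$ of $P$ has local index $1$ with respect to the dual lattice $L^\dual$. Comparing the hyperplane $\pro{u_F}{x}=l$ cutting out $F$ inside $(P,N)$ with the hyperplane $\pro{\tilde u_F}{x}=1$ cutting it out inside $(P,L)$, where $\tilde u_F$ is the $L^\dual$-primitive outer normal, forces $u_F=l\,\tilde u_F$. Hence the vertices of $Q^\dual$ are precisely the points $u_F/l\in L^\dual$, so $lP^\dual$ is exactly $Q^\dual$ dilated by $l$, and the scaling map $y\mapsto y/l$ realises the isomorphism $lP^\dual\cong Q^\dual$ precisely when it carries $\LambdaE{lP^\dual}$ onto $L^\dual$, i.e.~precisely when
\[
\LambdaE{lP^\dual}=l\,(\LambdaE{P})^\dual,
\]
which is the three-dimensional analogue of Proposition~\ref{prop:lattice}.

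\emph{Stage 3: the two inclusions, and the obstacle.} For the inclusion ``$\subseteq$'' I would show that $l$ divides $\pro{y}{x}$ for every lattice point $y$ on an edge of $lP^\dual$ and every lattice point $x$ on an edge of $P$: for a vertex $y=u_F$ this is just $u_F/l\in L^\dual$, and for $y$ in the relative interior of an edge one propagates the divisibility along a chain of successively adjacent facets, exactly as in the proof of Proposition~\ref{prop:lattice}, with the $1$-reflexivity of $(P,L)$ playing the role of the two-dimensional bookkeeping there; I expect this direction to go through. The reverse inclusion $l\,(\LambdaE{P})^\dual\subseteq\LambdaE{lP^\dual}$ --- equivalently, that the lattice points on the edges of $Q^\dual$ already span $L^\dual$ --- is the hard part, and presumably the reason the statement is only a conjecture: a single edge $[u_F,u_{F'}]$ of $lP^\dual$ need not contain enough $M$-points to recover $l$ times the $L^\dual$-primitive direction of the matching edge of $Q^\dual$, so the identity can only hold through some global cancellation. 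Three approaches seem worth trying: (i) verify it against the classifications of~\cite{Kas08a} (the $4319$ reflexive three-polytopes and the handful of $2$-, $3$- and $5$-reflexive ones) to isolate the mechanism; (ii) prove directly that for a three-dimensional reflexive polytope the lattice generated by its edges is ``saturated along each edge'', so that dilating by $l$ and intersecting with $M$ loses nothing; or (iii) bypass Stage~2 entirely by giving an algebro-geometric proof of the $24$-property for $l$-reflexive three-folds that imitates the toric/Calabi--Yau derivation of Theorem~\ref{thm:number24} due to Batyrev and Dais (\cite{BD96}, \cite[Theorem~4.3]{Snow05}) --- the most satisfying outcome, and the three-dimensional counterpart of the open problem raised after Corollary~\ref{cor:12-thm-loop}.
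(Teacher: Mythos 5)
This statement is an open conjecture: the paper offers no proof of it, only supporting evidence (the $\Proj^3/(\Z/4\times\Z/8)$ tetrahedron, the handful of $l$-reflexive canonical Fano threefolds from~\cite{Kas08a}, and the simplex $S$ built from $\Proj(1,2,3,6)$ showing that the hypothesis on $\LambdaE{P}$ cannot be dropped). Your proposal is likewise not a proof but a reduction, and you say so yourself. The reduction is sound and worth recording: Stage~2 correctly shows that the $\LambdaE{P}$-primitive normal of a facet $F$ is $u_F/l$, so that $lP^\dual$ is the $l$-th dilate of $Q^\dual$ as a point set, and hence the conjectured isomorphism (realised by $y\mapsto y/l$) would follow from the lattice identity $\LambdaE{lP^\dual}=l\,(\LambdaE{P})^\dual$, the edge-lattice analogue of Proposition~\ref{prop:lattice}; Stage~1 then correctly deduces the $24$-property from Theorem~\ref{thm:number24}, since $\LambdaE{P}$ contains every lattice point on every edge of $P$ and the scaling map matches each edge $E$ with $E^\dual$.

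The genuine gap is Stage~3, and it is larger than you concede. You grant that the inclusion $l\,(\LambdaE{P})^\dual\subseteq\LambdaE{lP^\dual}$ is open, but the forward inclusion does not simply ``go through'' either. For a vertex $y=u_F$ of $lP^\dual$ one indeed has $\pro{u_F}{x}=l\,\pro{u_F/l}{x}\in l\Z$ for all $x\in\LambdaE{P}$; but for a non-vertex lattice point $y$ on the edge $[u_F,u_{F'}]$, $y$ is a non-integral convex combination of $u_F$ and $u_{F'}$, so divisibility of $\pro{u_F}{x}$ and $\pro{u_{F'}}{x}$ by $l$ gives no control over $\pro{y}{x}$: writing $u_F-u_{F'}=mw$ with $w$ primitive, one only learns that $l$ divides $m\pro{w}{x}$, not $\pro{w}{x}$ itself. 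The two-dimensional propagation in the proof of Proposition~\ref{prop:lattice} leans on explicit $2\times 2$ coordinates and on the coincidence of facets with edges, and the paper's own tetrahedron $\sconv{(1,0,0),(3,4,0),(5,0,8),(-9,-4,-8)}$ shows that the naive higher-dimensional analogue of that proposition is false, so nothing can be taken on trust here. Note also that your identity forces $[N:\LambdaE{P}]\cdot[M:\LambdaE{lP^\dual}]=l^3$, a nontrivial numerical constraint that any proof along these lines would have to deliver. Your suggested fallbacks (checking the classification of~\cite{Kas08a}, or an algebro-geometric argument in the spirit of~\cite{BD96}) are reasonable research directions, but as it stands the proposal establishes neither inclusion and therefore does not prove the conjecture.
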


\begin{remark}
If $P$ is a $1$-reflexive polytope in dimension three then $\LambdaE{P}$ is equivalent to the boundary lattice $\LambdaP$~\cite{HN05}, so this is the natural sublattice to consider. In higher dimensions we would expect to restrict to the lattice generated by the codimension two faces. Furthermore, the results of~\cite{HN05} hold only in dimensions three and higher; dimension two is always a special case.
\end{remark}

\subsection{Classification algorithms in higher dimensions?}

Our classification algorithm in dimension two relies on the fact that for any $l$-reflexive polygon $P$ there exists a $1$-reflexive polygon $Q$ such that $P$ is the image of an integer $2\times 2$-matrix of determinant $l$ (Corollary~\ref{cor:reflexive}). This motivates our main question:\\

\textbf{Question:} Is an $l$-reflexive polytope $P$ $1$-reflexive with respect to the vertex lattice $\LambdaV{P}$?\\

We do not know of a counterexample. However, even if this question has a positive answer, it does not immediately yield a general classification algorithm. Notice that the $2$-reflexive polytope $S$ in Subsection~\ref{sec:edge} is the image under multiplication by a matrix of determinant $16>2$. This shows that it would be necessary to have a bound on the index of the vertex lattice of $l$-reflexive polytopes in dimension $n$, perhaps something analogous to Theorem~\ref{thm:summary_fwps}~(iv). This is not clear even in index $1$.

\begin{ack}
The authors wish to thank the RG Lattice Polytopes at the Freie Universit\"at Berlin, the Computational Algebra Group at the University of Sydney, and Imperial College London, for their hospitality and financial support. The first author is supported by EPSRC grant EP/I008128/1, the second author is supported by the US National Science Foundation (DMS 1102424). This work was supported in part by EPSRC Mathematics Platform grant EP/I019111/1.
\end{ack}

\appendix
\section{{\magma} source code}\label{apdx:source_code}
The following basic {\magma} code can be used to regenerate the classification of $l$-reflexive polygons.

\vspace{1em}
\begin{verbatim}
// Returns true iff P is l-reflexive for some index l. Also returns l.
function is_l_reflexive(P)
    if not IsFano(P) then return false,_; end if;
    l:=GorensteinIndex(P);
    if &and[Denominator(v) eq l : v in Vertices(Dual(P))] then
        return true,l;
    else
        return false,_;
    end if;
end function;

// Compute all non-isomorphic l-reflexive polygons generated by
// the Hermite normal forms with determinant l.
procedure generate_polys(l,~polys)
    if l eq 1 then
        polys[1]:=[PolytopeReflexiveFanoDim2(id) : id in [1..16]];
        return;
    end if;
    polys[l]:=[];
    Hs:=[Matrix(2,2,[l, i, 0, 1]) : i in [1..l-1] | GCD(l,i) eq 1];
    for id in [1..16] do
        P:=PolytopeReflexiveFanoDim2(id);
        for H in Hs do
            Q:=P * H;
            bool,k:=is_l_reflexive(Q);
            if bool and not &or[IsIsomorphic(Q,R) : R in polys[k]] then
                Append(~polys[k],Q);
            end if;
        end for;
    end for;
end procedure;

// The main loop (runs from index 1 to 29, takes approx. 1 minute)
polys:=AssociativeArray(Integers());
for l in [1..29 by 2] do generate_polys(l,~polys); end for;
\end{verbatim}

\bibliographystyle{amsalpha}

\begin{thebibliography}{BCF{\etalchar{+}}05}

\bibitem[AN06]{AN06}
Valery Alexeev and Viacheslav~V. Nikulin, \emph{Del {P}ezzo and {$K3$}
  surfaces}, MSJ Memoirs, vol.~15, Mathematical Society of Japan, Tokyo, 2006.

\bibitem[Ath05]{Ath05}
Christos~A. Athanasiadis, \emph{Ehrhart polynomials, simplicial polytopes,
  magic squares and a conjecture of {S}tanley}, J. Reine Angew. Math.
  \textbf{583} (2005), 163--174.

\bibitem[Bat94]{Bat94}
Victor~V. Batyrev, \emph{Dual polyhedra and mirror symmetry for {C}alabi-{Y}au
  hypersurfaces in toric varieties}, J. Algebraic Geom. \textbf{3} (1994),
  no.~3, 493--535.

\bibitem[BB92]{BB92}
A.~A. Borisov and L.~A. Borisov, \emph{Singular toric {F}ano three-folds}, Mat.
  Sb. \textbf{183} (1992), no.~2, 134--141, text in Russian. English transl.:
  \emph{Russian Acad. Sci. Sb. Math.}, \textbf{75} (1993), 277--283.

\bibitem[BB96a]{BB96b}
Victor~V. Batyrev and Lev~A. Borisov, \emph{Mirror duality and string-theoretic
  {H}odge numbers}, Invent. Math. \textbf{126} (1996), no.~1, 183--203.

\bibitem[BB96b]{BB96a}
\bysame, \emph{On {C}alabi-{Y}au complete intersections in toric varieties},
  Higher-dimensional complex varieties (Trento, 1994), de Gruyter, Berlin,
  1996, pp.~39--65.

\bibitem[BB97]{BB97}
\bysame, \emph{Dual cones and mirror symmetry for generalized {C}alabi-{Y}au
  manifolds}, Mirror symmetry, {II}, AMS/IP Stud. Adv. Math., vol.~1, Amer.
  Math. Soc., Providence, RI, 1997, pp.~71--86.

\bibitem[BCF{\etalchar{+}}05]{Snow05}
Matthias Beck, Beifang Chen, Lenny Fukshansky, Christian Haase, Allen Knutson,
  Bruce Reznick, Sinai Robins, and Achill Sch{\"u}rmann, \emph{Problems from
  the {C}ottonwood {R}oom}, Integer points in polyhedra---geometry, number
  theory, algebra, optimization, Contemp. Math., vol. 374, Amer. Math. Soc.,
  Providence, RI, 2005, pp.~179--191.

\bibitem[BD96]{BD96}
Victor~V. Batyrev and Dimitrios~I. Dais, \emph{Strong {M}c{K}ay correspondence,
  string-theoretic {H}odge numbers and mirror symmetry}, Topology \textbf{35}
  (1996), no.~4, 901--929.

\bibitem[BHW07]{BHW07}
Christian Bey, Martin Henk, and J{\"o}rg~M. Wills, \emph{Notes on the roots of
  {E}hrhart polynomials}, Discrete Comput. Geom. \textbf{38} (2007), no.~1,
  81--98.

\bibitem[BN08]{BN08}
Victor Batyrev and Benjamin Nill, \emph{Combinatorial aspects of mirror
  symmetry}, Integer points in polyhedra---geometry, number theory,
  representation theory, algebra, optimization, statistics, Contemp. Math.,
  vol. 452, Amer. Math. Soc., Providence, RI, 2008, pp.~35--66.

\bibitem[BR07]{BrR07}
Winfried Bruns and Tim R{\"o}mer, \emph{{$h$}-vectors of {G}orenstein
  polytopes}, J. Combin. Theory Ser. A \textbf{114} (2007), no.~1, 65--76.

\bibitem[Buc08]{Buc08}
Weronika Buczy{\'n}ska, \emph{Fake weighted projective spaces},
  \href{http://arxiv.org/abs/0805.1211}{\texttt{arXiv:0805.1211v1}}, May 2008.

\bibitem[Cas06]{Cas06}
Cinzia Casagrande, \emph{The number of vertices of a {F}ano polytope}, Ann.
  Inst. Fourier (Grenoble) \textbf{56} (2006), no.~1, 121--130.

\bibitem[Con02]{Con02}
Heinke Conrads, \emph{Weighted projective spaces and reflexive simplices},
  Manuscripta Math. \textbf{107} (2002), no.~2, 215--227.

\bibitem[Dai09a]{Dai09}
Dimitrios~I. Dais, \emph{Classification of toric log del {P}ezzo surfaces
  having {P}icard number 1 and index {$\le3$}}, Results Math. \textbf{54}
  (2009), no.~3-4, 219--252.

\bibitem[Dai09b]{Dais07}
\bysame, \emph{Classification of toric log del {P}ezzo surfaces having {P}icard
  number 1 and index {$\le3$}}, Results Math. \textbf{54} (2009), no.~3-4,
  219--252.

\bibitem[DN08]{DN08}
Dimitrios~I. Dais and Benjamin Nill, \emph{A boundedness result for toric log
  del {P}ezzo surfaces}, Arch. Math. (Basel) \textbf{91} (2008), no.~6,
  526--535.

\bibitem[Ehr67]{Ehr67}
Eug{\`e}ne Ehrhart, \emph{Sur un probl\`eme de g\'eom\'etrie diophantienne
  lin\'eaire. {II}. {S}yst\`emes diophantiens lin\'eaires}, J. Reine Angew.
  Math. \textbf{227} (1967), 25--49.

\bibitem[Ehr77]{Ehr77}
E.~Ehrhart, \emph{Polyn\^omes arithm\'etiques et m\'ethode des poly\`edres en
  combinatoire}, Birkh\"auser Verlag, Basel, 1977, International Series of
  Numerical Mathematics, Vol. 35.

\bibitem[FK08]{FK08}
Matthew H.~J. Fiset and Alexander~M. Kasprzyk, \emph{A note on palindromic
  $\delta$-vectors for certain rational polytopes}, Electron. J. Combin.
  \textbf{15} (2008), no.~N18.

\bibitem[Ful93]{Ful93}
William Fulton, \emph{Introduction to toric varieties}, Ann. of Math. Stud.,
  vol. 131, Princeton University Press, Princeton, NJ, 1993, The William H.
  Roever Lectures in Geometry.

\bibitem[HK10]{HK10b}
G{\'a}bor Heged{\"u}s and Alexander~M. Kasprzyk, \emph{Roots of {E}hrhart
  polynomials of smooth {F}ano polytopes}, Discrete and Computational Geometry
  (2010).

\bibitem[HN08]{HN05}
Christian Haase and Benjamin Nill, \emph{Lattices generated by skeletons of
  reflexive polytopes}, J. Combin. Theory Ser. A \textbf{115} (2008), no.~2,
  340--344.

\bibitem[HS02]{HS02}
Lutz Hille and Harald Skarke, \emph{Reflexive polytopes in dimension 2 and
  certain relations in {$\mathop{SL}_2(\Bbb Z)$}}, J. Algebra Appl. \textbf{1}
  (2002), no.~2, 159--173.

\bibitem[HS09]{HS09}
Christian Haase and Josef Schicho, \emph{Lattice polygons and the number
  {$2i+7$}}, Amer. Math. Monthly \textbf{116} (2009), no.~2, 151--165.

\bibitem[Kas09]{Kas08b}
Alexander~M. Kasprzyk, \emph{Bounds on fake weighted projective space}, Kodai
  Math. J. \textbf{32} (2009), 197--208.

\bibitem[Kas10]{Kas08a}
\bysame, \emph{Canonical toric {F}ano threefolds}, Canad. J. Math. \textbf{62}
  (2010), no.~6, 1293--1309.

\bibitem[KKN10]{KKN08}
Alexander~M. Kasprzyk, Maximilian Kreuzer, and Benjamin Nill, \emph{On the
  combinatorial classification of toric log del pezzo surfaces}, LMS Journal of
  Computation and Mathematics \textbf{13} (2010), 33--46.

\bibitem[KS97]{KS97}
Maximilian Kreuzer and Harald Skarke, \emph{On the classification of reflexive
  polyhedra}, Comm. Math. Phys. \textbf{185} (1997), no.~2, 495--508.

\bibitem[KS00]{KS00}
\bysame, \emph{Complete classification of reflexive polyhedra in four
  dimensions}, Adv. Theor. Math. Phys. \textbf{4} (2000), no.~6, 1209--1230.

\bibitem[LZ91]{LZ91}
Jeffrey~C. Lagarias and G{\"u}nter~M. Ziegler, \emph{Bounds for lattice
  polytopes containing a fixed number of interior points in a sublattice},
  Canad. J. Math. \textbf{43} (1991), no.~5, 1022--1035.

\bibitem[Nak07]{Nak07}
Noboru Nakayama, \emph{Classification of log del {P}ezzo surfaces of index
  two}, J. Math. Sci. Univ. Tokyo \textbf{14} (2007), no.~3, 293--498.

\bibitem[Nil05]{Nil05}
Benjamin Nill, \emph{Gorenstein toric {F}ano varieties}, Manuscripta Math.
  \textbf{116} (2005), no.~2, 183--210.

\bibitem[{\O}br07]{Oeb07}
Mikkel {\O}bro, \emph{An algorithm for the classification of smooth {F}ano
  polytopes}, \href{http://arxiv.org/abs/0704.0049}{\texttt{arXiv:0704.0049v1
  [math.CO]}}, classifications available from
  \href{http://grdb.lboro.ac.uk/}{\texttt{http://grdb.lboro.ac.uk/}}.

\bibitem[PRV00]{PRV00}
Bjorn Poonen and Fernando Rodriguez-Villegas, \emph{Lattice polygons and the
  number 12}, Amer. Math. Monthly \textbf{107} (2000), no.~3, 238--250.

\bibitem[Sta80]{Sta80}
Richard~P. Stanley, \emph{Decompositions of rational convex polytopes}, Ann.
  Discrete Math. \textbf{6} (1980), 333--342, Combinatorial mathematics,
  optimal designs and their applications (Proc. Sympos. Combin. Math. and
  Optimal Design, Colorado State Univ., Fort Collins, Colo., 1978).

\bibitem[Sta97]{Sta97}
\bysame, \emph{Enumerative combinatorics. {V}ol. 1}, Cambridge Studies in
  Advanced Mathematics, vol.~49, Cambridge University Press, Cambridge, 1997,
  With a foreword by Gian-Carlo Rota, Corrected reprint of the 1986 original.

\end{thebibliography}
\newcommand{\etalchar}[1]{$^{#1}$}
\providecommand{\bysame}{\leavevmode\hbox to3em{\hrulefill}\thinspace}
\providecommand{\MR}{\relax\ifhmode\unskip\space\fi MR }
\providecommand{\MRhref}[2]{%
  \href{http://www.ams.org/mathscinet-getitem?mr=#1}{#2}
}
\providecommand{\href}[2]{#2}

\end{document}